\newcommand{\prob}{\mathbb{P}}
\newcommand{\pe}{\mathbb{E}}
\newcommand{\edge}{\mathcal{E}}
\newcommand{\geo}{\overline{\textsc{Geo}}}
\newtheorem{thm}{Theorem}
\newtheorem{lem}[thm]{Lemma}
\newtheorem{prop}[thm]{Proposition}
\newtheorem{clam}[thm]{Claim}
\newtheorem{defin}{Definition}
\newtheorem{ass}{Assumption}
\newtheorem{rem}{Remark}
\numberwithin{equation}{subsection}
\title{Lower bounds for fluctuations in first-passage percolation for general distributions}
\author{Michael Damron \\ Georgia Tech \and Jack Hanson \\ City College, CUNY \and Christian Houdr\'e \\ Georgia Tech \and Chen Xu \\ Uber Technologies}
\begin{document}

\maketitle

\abstract{In first-passage percolation (FPP), one assigns i.i.d.~weights to the edges of the cubic lattice $\mathbb{Z}^d$ and analyzes the induced weighted graph metric. If $T(x,y)$ is the distance between vertices $x$ and $y$, then a primary question in the model is: what is the order of the fluctuations of $T(0,x)$?
It is expected that the variance of $T(0,x)$ grows like the norm of $x$ to a power strictly less than 1, but the best lower bounds
available are (only in two dimensions) of order $\log \|x\|$. This result was found in the '90s and there has not been any improvement since. In this paper, we address the problem of getting stronger fluctuation
bounds: to show that $T(0,x)$ is with high probability not contained in an
interval of size $o(\log \|x\|)^{1/2}$, and similar statements for FPP in
thin cylinders. Such statements have been proved for special edge-weight
distributions, and here we obtain such bounds
for general edge-weight distributions. The methods involve inducing a fluctuation in the number of edges in a box whose weights are of ``hi-mode'' (large).}

\section{Introduction}

\subsection{Background}

We will consider first-passage percolation (FPP) on $\mathbb{Z}^2$ (or more generally $\mathbb{Z}^d$ with $d \geq 2$), with set $\mathcal{E}^2$ of nearest-neighbor edges. This means that we take a collection $(t_e)_{e \in \mathcal{E}^2}$ of i.i.d.~nonnegative random variables (passage times) and, for any lattice path $\Gamma$ (alternating sequence $x_0, e_0, x_1, e_1, \dots, x_{n-1},e_{n-1},x_n$ of vertices and edges such that $e_i$ has endpoints $x_i,x_{i+1}$), we assign the passage time $T(\Gamma) = \sum_{i=0}^{n-1} t_{e_i}$. Last, for vertices $x,y \in \mathbb{Z}^2$, we put
\[
T(x,y) = \inf_{\Gamma : x \to y} T(\Gamma),
\]
where the infimum is over all paths from $x$ to $y$.

$T$ as defined is a pseudometric, and FPP is the study of the random metric space $(\mathbb{Z}^2, T)$. A primary question involves the order of the variable $T(0,x)$, when $\|x\|$ is large. Under mild conditions on the distribution of $(t_e)$, one can show that $T(0,x)$ grows linearly: there is a norm $g$ on $\mathbb{R}^2$ such that 
\[
\lim_{\|x\| \to \infty} \frac{|T(0,x) - g(x)|}{\|x\|} = 0 \text{ a.s.},
\]
where $\|x\|$ is the Euclidean norm of $x$. (See Section~2.3 of \cite{ADH_book}.) This can be interpreted as
\[
T(0,x) = g(x) + o(\|x\|) \text{ as } \|x\| \to \infty.
\]
This leads to another important question: what is the order of the error term $o(\|x\|)$?

The error term splits into a ``random fluctuation'' term $T(0,x) - \mathbb{E}T(0,x)$ and a ``nonrandom fluctuation'' term $\mathbb{E}T(0,x) - g(x)$. Bounds on the latter (see \cite{Alexander97, ADH_gamma, Tessera}) typically use bounds on the former, so we will focus on the random fluctuation term. A typical way to measure its size is to estimate the variance of $T(0,x)$. Upper bounds on the variance were given by Kesten (order $\|x\|$ in \cite{Kesten_speed}) and later improved to $\|x\|/\log \|x\|$ in a series of works by Benjamini-Kalai-Schramm \cite{BKS}, Bena\"im-Rossignol \cite{BR}, and Damron-Hanson-Sosoe \cite{DHS15}. All of these works were for general dimensions $d \geq 2$. These are far from the predicted value of $\mathrm{Var}~T(0,x) \sim \|x\|^{2/3}$ in two dimensions, which has been verified in a related exactly-solvable model \cite{J}.

Lower bounds on the variance are less developed. In the '90s, Pemantle-Peres \cite{PP} (for exponential distribution) and Newman-Piza \cite{NP} (for general distributions, extended by Auffinger-Damron in \cite{AD11}) showed inequalities of the form
\[
\mathrm{Var}~T(0,x) \geq c \log \|x\|, \text{ for } x \neq 0,
\]
in two-dimensional FPP.  (The best lower bound for general dimensions remains of constant order \cite{Kesten_speed}.) Although the work \cite{NP} uses a martingale method which only gives a variance bound, the other \cite{PP} shows a stronger property of the distribution of $T(0,x)$. Specifically, they show that it cannot be supported on an interval of size $o(\sqrt{\log \|x\|})$: for any intervals $[a_n,b_n]$ with $b_n-a_n = o(\sqrt{\log n})$, one has $\mathbb{P}(T(0,nv) \in [a_n,b_n]) \to 0$ for any fixed unit vector $v$. Their proof uses the memoryless property of the exponential distribution to exhibit the growth of the ball $B(t) = \{x \in \mathbb{Z}^2 : T(0,x) \leq t\}$ as a Markov process.

A general method was given by Chatterjee in \cite{C17} to prove ``fluctuation lower bounds'' for a range of statistical physics models, so long as the underlying randomness lies in a certain class. For FPP, he has shown that under a strong regularity assumption on the distribution of $(t_e)$ (requiring $t_e$ to be a continuous random variable with a smooth density and rapidly decaying tails), the following holds. If $(y_n)$ is a sequence of points such that $\|y_n\|$ grows like a constant multiple of $n$, then for $d=2$, there exist $c_1,c_2>0$ such that for all large $n$, and all $-\infty < a \leq b < \infty$ with $b-a \leq c_1 \sqrt{\log n}$, one has $\mathbb{P}(a \leq T(0,y_n) \leq b) \leq 1-c_2$. Note that results of this type do not in general follow from variance lower bounds.

In this paper, we aim to improve the results of Chatterjee and Pemantle-Peres to general distributions. Our main results below apply to the largest class of distributions for which the FPP model does not exhibit degenerate behavior. In contrast to \cite{PP} and \cite{C17}, our methods do not rely on specific properties of the underlying distribution, and explore a more combinatorial avenue. The main idea is inspired by the study of the longest common subsequence problem (see \cite{gong2018lower, HoudreLCSVARLB2012, lember2009standard}), and involves introducing a fluctuation in the number of hi-mode weights (weights in the top part of the distribution of $t_e$). The notion of hi-mode weights was introduced and used in the Ph.D. thesis of Xu \cite{chen_thesis}, where lower bounds of order $n^{\frac{r(1-\alpha)}{2}}$ are obtained for the $r$-th central moments $(r \geq 1)$ in a related last-passage percolation model over an $n \times \lfloor n^{\alpha} \rfloor$ grid. (See \cite{CX18}.)

\subsection{Main results}
For the statement of our results, we need some conditions on the common distribution function $F$ of $(t_e)$:
\begin{equation}\label{eq: perc_assumption}
F(0)<p_c \text{ and } F(I) < \vec{p_c} \text{ if }I>0,
\end{equation}
where $p_c = 1/2$ is the critical value for two-dimensional bond percolation, $\vec{p_c}\sim 0.644$ is the critical value for two-dimensional oriented bond percolation (see \cite{durrett_oriented}), and $I$ is the infimum of the support of $F$. Note that \eqref{eq: perc_assumption} implies that $F$ is non-degenerate.
\begin{thm}\label{thm: main_thm}
Let $F$ be a distribution satisfying \eqref{eq: perc_assumption}. There exist families of reals $(A_x)_{x \in \mathbb{Z}^2}$ and $(B_x)_{x \in \mathbb{Z}^2}$ such that
\[
\liminf_{\|x\| \to \infty} \mathbb{P}(T(0,x) \leq A_x) > 0,
\]
\[
\liminf_{\|x\| \to \infty} \mathbb{P}(T(0,x) \geq B_x) > 0,
\]
and
\[
\liminf_{\|x\| \to \infty}\frac{B_x-A_x}{\sqrt{\log \|x\|}} > 0.
\]
\end{thm}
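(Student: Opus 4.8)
The plan is to fabricate, at each of $\Theta(\log\|x\|)$ spatial scales, a conditional fluctuation of $T(0,x)$ of order one by tampering with the number of \emph{hi-mode} edges (edges whose weight lies in the top part of the support of $F$) inside an annulus at that scale, and then to collect these nearly independent contributions into the desired gap $B_x-A_x\gtrsim\sqrt{\log\|x\|}$. First, using \eqref{eq: perc_assumption}, I would fix a value $a$ in the interior of the support with $0<F(a-)\le F(a)<1$, chosen so that: (i) $\{e:t_e<a\}$ satisfies a supercritical percolation estimate, so cheap hi-mode-free detours of \emph{bounded} length are available locally; (ii) $F$ charges an interval $[a,a']$, so resampling genuinely changes how many edges have $t_e\ge a$; and (iii) $F(I)<\vec{p_c}$ forces, via subcriticality of the oriented percolation of edges with $t_e=I$, that no path stays hi-mode-free along a stretch of linear length without paying a macroscopic price. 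Call $e$ hi-mode if $t_e\ge a$. Properties (i) and (iii) are exactly what prevent the optimal path from cheaply rerouting around a perturbation.

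\textbf{One-scale anti-concentration.} For a dyadic radius $\rho=2^k$ with $2\rho\le\|x\|$, consider the annulus $A_\rho=B(0,2\rho)\setminus B(0,\rho)$, which every path from $0$ to $x$ must cross. Condition on all weights outside $A_\rho$ and on the entrance/exit vertices $u,v$ of an optimal crossing; this crossing contributes $T_{A_\rho}(u,v)$ to $T(0,x)$. Inside $A_\rho$ I would run a two-stage resampling: first reveal the hi-mode field $(\mathbbm{1}\{t_e\ge a\})_{e\subset A_\rho}$, then reveal the weights given their modes. Using FKG together with the inputs of Step 1, one shows the optimal crossing uses a nondegenerate number of hi-mode edges that shifts by a macroscopic amount when the hi-mode field is resampled, and that each forced extra hi-mode edge costs at least a fixed $\delta>0$ \emph{net of rerouting}; hence $T_{A_\rho}(u,v)$, and thus $T(0,x)$, cannot be concentrated on an interval of length $o(1)$ given the conditioning, and there is a threshold, measurable with respect to the environment outside $A_\rho$, straddled by $T(0,x)$ with probability bounded below --- uniformly in $\rho$ and in the boundary data.

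\textbf{Assembly over scales.} Reveal the disjoint annuli $A_{2^k}$, $k=1,\dots,\lfloor\log_2\|x\|\rfloor-1$, one at a time, yielding a Doob martingale decomposition $T(0,x)-\pe T(0,x)=\sum_k\Delta_k$. The previous step gives, for each $k$, a conditional anti-concentration of $\Delta_k$ on an event of probability bounded below; since there are $\Theta(\log\|x\|)$ such increments, a bookkeeping argument (in the spirit of summing independent coin flips, with the conditioning absorbed by the martingale structure) upgrades these into families $(A_x),(B_x)$ with $\liminf\prob(T(0,x)\le A_x)>0$, $\liminf\prob(T(0,x)\ge B_x)>0$, and $B_x-A_x\gtrsim\sqrt{\log\|x\|}$. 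General $x$ (not along an axis) is handled by the same chain of annuli centered at $0$. Note this route produces $A_x$ and $B_x$ directly rather than through a variance bound --- which matters, since the tails of $T(0,x)$ are too heavy to extract such a distributional gap from $\Var\,T(0,x)$ alone.

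\textbf{Main obstacle.} The heart of the argument is the ``net of rerouting'' clause: inflating the hi-mode count in $A_\rho$ could in principle be absorbed by the optimal path taking a longer, hi-mode-sparse route through the annulus, washing out the induced shift --- precisely the adaptivity that keeps the true fluctuation exponent away from $1/2$. Bounding the minimal cost of such a detour from below, uniformly in the conditioned boundary data and using only $F(0)<p_c$ and $F(I)<\vec{p_c}$, is the main technical work. A secondary difficulty is securing enough conditional independence across the $\Theta(\log\|x\|)$ annuli for the one-scale estimates to genuinely sum: crossing times of nested annuli are strongly correlated, and it is this correlation, rather than the per-scale fluctuation, that pins the bound at the order $\sqrt{\log\|x\|}$.
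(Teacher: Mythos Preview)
Your high-level plan---induce an $\Omega(1)$ fluctuation in $T(0,x)$ at each of $\Theta(\log\|x\|)$ annular scales by perturbing the hi-mode count, then aggregate---matches the paper's strategy. You also correctly identify the ``net of rerouting'' issue as central; in the paper this is handled by showing (via Marchand's strict inequality for the time constant and a weak shape theorem) that every geodesic must use $\geq cK^{j-1}$ hi-mode edges in the $j$-th annulus, so flipping some of them to lo-mode forces a genuine drop in $T$.

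However, your assembly step has a real gap. You write that ``a bookkeeping argument (in the spirit of summing independent coin flips, with the conditioning absorbed by the martingale structure)'' upgrades the per-scale anti-concentration to the two-sided distributional statement. This is exactly the step that does \emph{not} follow from general martingale principles: a martingale $\sum_k \Delta_k$ whose increments each have conditional spread $\geq c$ has variance $\gtrsim \log\|x\|$ (this is essentially Newman--Piza), but a variance lower bound does not yield the conclusion of the theorem, as the paper's introduction emphasizes. To get the two-sided small-ball statement you need additional structure on the increments---for instance, that each one is a monotone function of an independent fair coin with a jump of fixed size. The paper supplies precisely this structure: it writes the hi-mode count $N_j$ in annulus $j$ as $X_j+\eta_j Y_j$ with $\eta_j$ Bernoulli$(1/2)$ independent of $(X_j,Y_j)$, conditions on everything except $(\eta_j)_{j\in\mathcal{I}_n}$, shows that the conditional expectation $\pe[T_n\mid\vec N]$ is strictly monotone in each $\eta_j$ with a fixed gap, and then invokes Sperner's theorem to bound the probability that $\pe[T_n\mid\vec N]$ lands in any interval of length $\epsilon$. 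This antichain argument is the missing engine in your proposal.

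There is also a conditioning mismatch in your one-scale step. You condition on the entrance/exit vertices $u,v$ of the optimal crossing of $A_\rho$, but these depend on the weights \emph{inside} $A_\rho$, so you cannot freeze them while resampling the interior; and even setting that aside, your one-scale statement conditions on the environment \emph{everywhere} outside $A_\rho$, which is not the filtration $\mathcal{F}_{k-1}$ generated by the previously revealed annuli---so the $\Delta_k$ in your Doob decomposition are not the objects your one-scale lemma controls. The paper sidesteps all of this by never fixing boundary vertices: it works with $\pe[T_n\mid\vec N]$ as a function of the global vector $\vec N$ and uses only monotonicity of $T$ in each $N_j$ together with the geodesic lower bound on hi-mode edge usage.

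Finally, the paper needs a truncation step (choosing $A_n$ so that a median of $\pe[T_n\mid\vec N]$ sits at the center of a window of width $\sqrt{\log\|x_n\|}$) to pass from the small-ball bound on the conditional expectation back to $T(0,x_n)$ itself; without tail control this transfer is not automatic, and your outline does not address it.
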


\begin{rem}
Here we discuss optimality of the condition \eqref{eq: perc_assumption}. If $I=0$ and $F(0)>p_c$, then there is an infinite component of zero-weight edges, and this makes $T(0,x)$ stochastically bounded in $x$, so Theorem~\ref{thm: main_thm} cannot hold. If $I=0$ and $F(0)=p_c$, then it was shown in \cite{DLW} that $\mathrm{Var}~T(0,x) \asymp \sum_{k=1}^{\lfloor \log \|x\|\rfloor} (F^{-1}(p_c + 1/2^k))^2$. Similar arguments will show that under no moment condition, Theorem~\ref{thm: main_thm} holds with $\sqrt{\log \|x\|}$ replaced by $\sqrt{\sum_{k=1}^{\lfloor \log \|x\|\rfloor} (F^{-1}(p_c+1/2^k)})^2$. If, instead, one has $I>0$ and $F(I) \geq \vec{p}_c$, then the limit shape for the model has flat segments (see \cite[Sec.~2.5]{ADH_book}), and for directions corresponding to these segments, $T(0,x)$ has fluctuations of order constant. For directions outside these segments, the arguments of this paper can be adapted (the proof of Lemma~\ref{lem: good_lemma_2} must be replaced with an analysis similar to that of \cite[Sec.~4]{AD11}) to show that under a moment condition on $t_e$, $T(0,x)$ still fluctuates at least with order $\sqrt{\log \|x\|}$.
\end{rem}

\begin{rem}
Our strategy for the proof of Theorem~\ref{thm: main_thm}, when applied in dimensions $d>2$, gives only a constant lower bound. See the discussion at the beginning of Section~\ref{sec: cylinders}.
\end{rem}

The next result concerns FPP in thin cylinders of $\mathbb{Z}^2$. For $\alpha > 0$, we define the restricted cylinder passage time between vertices $0$ and $x$ as
\[
T(0,x;\alpha) = \inf_{\stackrel{\Gamma: 0 \to x}{\Gamma \subset C(x;\alpha)}} T(\Gamma),
\]
where $C(x;\alpha)$ is the thin cylinder
\[
C(x;\alpha) = \{z \in \mathbb{R}^2 : dist(z,L_x) \leq \|x\|^\alpha\},
\]
$dist$ is the Euclidean distance, and $L_x$ is the line through $0$ and $x$.
\begin{thm}\label{thm: cylinders}
Let $F$ be a distribution satisfying \eqref{eq: perc_assumption}. There exist families of reals $(A_x)_{x \in \mathbb{Z}^2}$ and $(B_x)_{x \in \mathbb{Z}^2}$ such that
\[
\liminf_{\|x\| \to \infty} \mathbb{P}(T(0,x;\alpha) \leq A_x) > 0,
\]
\[
\liminf_{\|x\| \to \infty} \mathbb{P}(T(0,x;\alpha) \geq B_x) > 0,
\]
and
\[
\liminf_{\|x\| \to \infty} \frac{B_x-A_x}{\|x\|^{\frac{1-\alpha}{2}}} > 0.
\]
\end{thm}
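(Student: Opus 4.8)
We outline the proof of Theorem~\ref{thm: cylinders}, assuming throughout that $\alpha\in(0,1)$, which is the only non-trivial range.

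The plan is to reveal the weights inside $C(x;\alpha)$ in $N=N(x)\asymp\|x\|^{1-\alpha}$ stages and to show that at a positive fraction of the stages the conditional mean of $T(0,x;\alpha)$ moves by an amount bounded below, in both directions with probability bounded below; a martingale anti-concentration inequality then forces $T(0,x;\alpha)$ to spread over an interval of length $\asymp\sqrt N=\|x\|^{(1-\alpha)/2}$ about its mean. Concretely, fix $\epsilon>0$ with $F(I+\epsilon)<\vec{p_c}$ (possible by \eqref{eq: perc_assumption}) and call an edge \emph{hi-mode} if $t_e>I+\epsilon$; cut $C(x;\alpha)$ into $N$ disjoint consecutive blocks $\cW_1,\dots,\cW_N$ transverse to $x$, each roughly a square of side comparable to the width $\|x\|^\alpha$, so that every path from $0$ to $x$ in $C(x;\alpha)$ crosses each $\cW_i$ between its two faces. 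Let $\mathcal{F}_i$ be generated by the weights in $\cW_1\cup\dots\cup\cW_i$ together with all weights outside $\bigcup_j\cW_j$, and set $\Delta_i=\mathbb{E}[T(0,x;\alpha)\mid\mathcal{F}_i]-\mathbb{E}[T(0,x;\alpha)\mid\mathcal{F}_{i-1}]$, so that $T(0,x;\alpha)-\mathbb{E}\,T(0,x;\alpha)=\sum_{i=1}^N\Delta_i$. Since the $\cW_i$ are disjoint and the weights i.i.d.\ and translation invariant, everything reduces to a single-block estimate: there are constants $\eta,\rho>0$, not depending on $i$ or $x$, with $\mathbb{P}(\Delta_i\ge\eta\mid\mathcal{F}_{i-1})\ge\rho$ and $\mathbb{P}(\Delta_i\le-\eta\mid\mathcal{F}_{i-1})\ge\rho$ for all but the two extreme indices $i$.

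The single-block estimate is the heart of the matter, and rests on a lemma in the spirit of Lemma~\ref{lem: good_lemma_2}. Inside $\cW_i$ one produces, with probability bounded away from $0$ and $1$, a local configuration together with a prescribed modification of a bounded number of its hi-mode weights that changes the minimal cost of crossing $\cW_i$ by a definite constant $\pm\delta$; the key point is that this change must be felt by \emph{every} crossing of $\cW_i$ — in particular by the portion of the geodesic of $C(x;\alpha)$ inside $\cW_i$ — not just by one fixed path. This robustness is supplied by oriented percolation: since edges of weight at most $I+\epsilon$ have density $F(I+\epsilon)<\vec{p_c}$ they do not percolate in the oriented sense, so across $\cW_i$ any nearly directed path must traverse a positive density of hi-mode edges, while crossings too far from directed are excluded by a shape-theorem/large-deviation argument (this is where $F(0)<p_c$, and, if desired, a truncation of unbounded weights, enter). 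A monotone coupling of the modified and unmodified block environments, together with the fact that $\cW_i$ cannot be avoided, then transfers the constant gap $\pm\delta$ to $\mathbb{E}[T(0,x;\alpha)\mid\mathcal{F}_i]$, giving the two-sided conditional anti-concentration of $\Delta_i$.

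The main obstacle is exactly this transfer — showing that altering the weights only inside a single block moves $\mathbb{E}[T(0,x;\alpha)\mid\mathcal{F}_i]$ by $\Omega(1)$ rather than having the effect washed out, uniformly over the already-revealed environment. The danger is that the geodesic of $C(x;\alpha)$ has $\asymp\|x\|^{1-\alpha}$ blocks' worth of room to reroute around one expensive block, and — in contrast with \cite{C17} — for a general, possibly atomic and heavy-tailed $F$ there is no smooth density with which to control such a rerouting; one must argue combinatorially. It is here that the cylinder geometry is used: $\cW_i$ is a full cross-section of a strip of width only $\|x\|^\alpha$, so the hi-mode/oriented bottleneck forces the extra cost onto all crossings at once and the geodesic cannot dodge it. Granting the single-block estimate, the independence of the $\cW_i$ and a martingale anti-concentration inequality applied to $\sum_i\Delta_i$ yield constants $c_1,c_2>0$ with $\mathbb{P}(T(0,x;\alpha)\ge\mathbb{E}\,T(0,x;\alpha)+c_1\sqrt N)\ge c_2$ and $\mathbb{P}(T(0,x;\alpha)\le\mathbb{E}\,T(0,x;\alpha)-c_1\sqrt N)\ge c_2$; taking $B_x=\mathbb{E}\,T(0,x;\alpha)+c_1\sqrt N$, $A_x=\mathbb{E}\,T(0,x;\alpha)-c_1\sqrt N$ and recalling $N\asymp\|x\|^{1-\alpha}$ gives the three displayed assertions of Theorem~\ref{thm: cylinders}.
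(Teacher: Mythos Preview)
Your block-by-block martingale approach is quite different from the paper's (which resamples the hi-mode \emph{count} in a single top-scale annulus via a Bernoulli decomposition $N_j=X_j+\eta_jY_j$ and then runs an antichain argument on the $\eta$'s). In principle a block decomposition could be made to work, but as written there are genuine gaps.

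First, under \eqref{eq: perc_assumption} alone there is no moment hypothesis on $t_e$, so $\mathbb{E}\,T(0,x;\alpha)$ need not be finite; your centers $A_x,B_x=\mathbb{E}\,T(0,x;\alpha)\pm c_1\sqrt N$ may then be undefined, and the increments $\Delta_i$ may fail to be integrable. This is exactly why the paper does not center at the mean: it truncates $T$ to a window of the target width and centers at a carefully chosen \emph{median} of the conditional expectation (Lemma~\ref{lem: truncation}). The same issue undermines your final step: the unnamed ``martingale anti-concentration inequality'' for $\sum_i\Delta_i$ needs some control on the increments (boundedness, or at least moments) that you do not have under \eqref{eq: perc_assumption}.

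Second, the single-block estimate --- which you yourself flag as ``the main obstacle'' --- is asserted rather than proved; you write ``Granting the single-block estimate\dots'' and move on. Moreover, the mechanism you sketch cannot work as stated: a modification of ``a bounded number'' of edge-weights in $\cW_i$ is felt only by crossings passing through those particular edges, so it cannot shift \emph{every} crossing of a block of side $\sim\|x\|^\alpha$ by a fixed $\pm\delta$; to hit every crossing one would need a full cut of order $\|x\|^\alpha$ edges, and the event that a prescribed cut is entirely hi-mode (or lo-mode) does not have probability bounded below uniformly in $x$. The paper sidesteps this by a different device: it never tries to move all crossings, but instead resamples the \emph{count} $N_{k_0}$ of hi-mode edges in the region and shows that with uniformly positive probability the random set of edges whose mode is flipped meets the geodesic in many places (the analogue of Claim~\ref{clam:upslb}, via a permutation computation); the drop in $T$ is then bounded below through Proposition~\ref{prop:linear}. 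That targeting of the (random) geodesic, rather than of every crossing, is the missing idea in your sketch.
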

Although the above result is stated for two dimensions, it extends to $\mathbb{Z}^d$ with $d \geq 3$ under the assumption $\mathbb{E}t_e<\infty$. (This is needed to apply an analogue of \cite[Theorem~1.5]{Marchand} in the proof of Lemma~\ref{lem: good_lemma_2}.) The corresponding exponent is $\frac{1-\alpha(d-1)}{2}$.

\begin{rem}
Theorem~\ref{thm: cylinders} has implications for the original (unconstrained) FPP model. Under the (unproven) assumption that the asymptotic shape satisfies the ``positive curvature inequality'' (see the lower bound of Equation~(2.28) in \cite{ADH_book}) in direction $u$, one can show (with an additional moment condition on $(t_e)$) that any geodesic between $0$ and $nu$ will, with high probability, be confined to $C(nu;\alpha)$ for any given $\alpha > 3/4$. In this setting, the restricted cylinder passage time $T(0,nu;\alpha)$ is equal to $T(0,nu)$, and we obtain fluctuations for $T(0,nu)$ of order at least $n^{\beta}$ for any $\beta < 1/8$. (This result is analogous to variance lower bounds provided by Newman-Piza \cite{NP} and is one manifestation of the relationship between the variance exponent and the transversal (geodesic) wandering exponent --- see \cite{Chatterjee,LNP}.) See also \cite{xxx} for a corresponding result in the aforementioned related last-passage percolation model.
\end{rem}

The proofs of both theorems are similar, and the second is even somewhat easier. For this reason, we will give the full proof of Theorem~\ref{thm: main_thm} in Section~\ref{sec: main_proof}, and sketch the proof of Theorem~\ref{thm: cylinders}, indicating the necessary adjustments to the first proof, in Section~\ref{sec: cylinders}.

\begin{rem}
While we were finalizing this paper, Bates and Chatterjee \cite{BC18} posted a paper to the arXiv containing results similar to those listed above, with substantially different proof methods. While our moment condition is weaker, their work also includes a study of fluctuations in polymer models (in addition to percolation models). We have not yet tried to extend our methods to other models.
\end{rem}

\section{Proof of Theorem~\ref{thm: main_thm}}\label{sec: main_proof}
Let us assume to aim for a contradiction:
\begin{ass}\label{ass: main_assumption}
We assume that there exist
\begin{enumerate}
\item a sequence $(w_n)$ of reals decreasing to 0,
\item a sequence $(a_n)$ of reals, and
\item a sequence of nonzero points $(x_n)$ in $\mathbb{Z}^2$ with $\|x_n\| \to \infty$ 
\end{enumerate}
such that if $J_n$ is defined as 
\[
J_n = \left[ a_n, a_n + w_n \sqrt{\log \|x_n\|}\right],
\]
then
\[
\mathbb{P}\left(T(0,x_n) \in J_n \right) \to 1 \text{ as } n \to \infty.
\]
\end{ass}


We will represent the passage time $T(0,x)$ as a function of three quantities. First, due to \eqref{eq: perc_assumption}, we may find $d_0 >I$ such that 
\begin{equation}\label{eq: d_0_def}
F(d_0) \in (0,1).
\end{equation}
Note that this implies that
\begin{equation}\label{eq: implication_d_0}
\mathbb{P}(t_e < d_0) > 0.
\end{equation}
\begin{defin}
Any weight $t_e$ with $t_e \leq d_0$ is called lo-mode, and all other weights are called hi-mode. 
\end{defin}
For $K>0$ large and to be determined later, we define the box $\mathcal{B}(j)$ for $j \geq 1$ to be the set of edges with both endpoints in $[-K^j,K^j]^2$, and the annuli
\begin{equation}\label{eq: annulus_def}
A(1) = \mathcal{B}(1) \text{ and } A(j) = \mathcal{B}(j) \setminus \mathcal{B}(j-1) \text{ for } j \geq 2.
\end{equation}
Let $\vec{N} = (N_1, N_2, \dots)$ be a vector with independent entries, such that $N_j$ is binomial with parameters $\#A(j)$ and $1-F(d_0)$. $N_j$ will represent the number of edges with hi-mode weights in $A(j)$. Let $\Pi = (\pi_1, \pi_2, \dots)$ be a vector of independent uniform permutations, where $\pi_j$ is an ordering of the elements of $A(j)$. In other words, $\pi_j$ is a (uniformly chosen) bijection from $A(j)$ to $\{1, 2,\, \ldots,\, \#A(j)\}$; we write the image of an element $e \in A(j)$ as $\pi_j(e)$. Last, let $P = (t_e^{(L)},t_e^{(H)})_{e \in \mathcal{E}^2}$ be a collection of i.i.d.~pairs of weights assigned to each edge, with $t_e^{(L)}$ and $t_e^{(H)}$ independent and distributed as $t_e$ conditional on $\{t_e \leq d_0\}$ and $\{t_e > d_0\}$ respectively. Given these variables (which we assume all to be mutually independent), we define an edge-weight configuration $(t_e)$ by setting $t_e = t_e^{(H)}$ if $e \in A(j)$ and $\pi_j(e) \leq N_j$, and $t_e = t_e^{(L)}$ otherwise. Note that $(t_e)$ is then distributed as our original edge-weights were, and so from now on we will use only these $t_e$'s. We can then think of the passage time $T(0,x)$ as a function of the triple $(\vec{N},\Pi, P)$.

\subsection{Outline of the proof}

The main idea of the proof is to examine the effect of changing the vector $\vec{N}$, which records the number of hi-mode weights in the system. In each annulus $A(k)$, the variable $N_k$ is likely to fluctuate on the order of the square root of the volume of $A(k)$, and this fluctuation changes the passage time $T(0,x_n)$ by at least a constant. Because weights of edges in distinct annuli are independent, the total change in $T(0,x_n)$, summed over all $A(k)$ from $k=1$ to $C\log\|x_n\|$ should be of order $\sqrt{\log \|x_n\|}$.

To make the above idea rigorous, we attempt to show a ``small ball'' probability bound for $T(0,x_n)$ of the form
\[
\sup_r \mathbb{P}(T(0,x_n) \in [r,r+\epsilon] ) \leq \frac{C}{\sqrt{\log \|x_n\|}}.
\]
If we were able to do this, then we could immediately contradict Assumption~\ref{ass: main_assumption}. Unfortunately, we can only prove such a small ball bound for the conditional expectation $\mathbb{E}[T(0,x_n) \mid \vec{N}]$, and this bound does not directly contradict the assumption. To obtain a contradiction from it, we need to know that $T(0,x_n)$ has sufficiently quickly decaying tails. For this reason, we start by making a truncation $T_n$ of $T(0,x_n)$ with the following properties:
\begin{enumerate}
\item $T_n \in [A_n, A_n + \sqrt{\log \|x_n\|}]$ a.s.~for some $A_n \in \mathbb{R}$, and
\item the midpoint $M_n$ of the above interval is a median of $\mathbb{E}[T_n \mid \vec{N}]$.
\end{enumerate}
This truncation is done in step 1 (Section~\ref{sec: step_1}).

In step 2 (Section~\ref{sec: step_2}), we prove the small ball probability bound for the conditional expectation of the truncated passage time, $\mathbb{E}[T_n \mid \vec{N}]$ (see Proposition~\ref{prop: small_ball}). The proof involves encoding the $N_j$'s using a sequence of i.i.d.~Bernoulli random variables, and analyzing them combinatorially. The event that $\mathbb{E}[T_n \mid \vec{N}]$ lies in an interval $[r,r+\epsilon]$ is shown (in Lemma~\ref{lem:showanti}) to be a particular type of subset of the hypercube known as an antichain. We can then apply Sperner's theorem on the size of antichains to obtain the small ball result. The antichain lemma, Lemma~\ref{lem:showanti}, only applies to a certain set of ``good'' values of $\vec{N}$ which is defined in step 1 (see Definition~\ref{def: good}). A large part of step 1 is focused on showing that this set has high probability (see Proposition~\ref{prop: good}), and involves new geodesic estimates under no moment condition (Lemma~\ref{lem: good_lemma_2}).

The last step, step 3 (Section~\ref{sec: step_3}), uses the small ball result of step 2 to show that one has
\[
\mathbb{P}(T(0,x_n) \leq M_n - c_1\sqrt{\log \|x_n\|}) > c_1,
\]
and
\[
\mathbb{P}(T(0,x_n) \geq M_n + c_1\sqrt{\log \|x_n\|}) > c_1,
\]
for some $c_1>0$. This is done in Proposition~\ref{prop: contradiction}, and contradicts Assumption~\ref{ass: main_assumption}.

\subsection{Step 1. Truncation and definition of good $\vec{N}$'s.} \label{sec: step_1}

First we make a particular truncation of $T(0,x_n)$. This is done so that estimates on $\mathbb{E}[T_n \mid \vec{N}]$ can be brought back to $T(0,x_n)$ in step 3.

\begin{lem}\label{lem: truncation}
For each $n$, there exists a real $A_n$ such that if $B_n := A_n + \sqrt{\log\|x_n\|}$ and
\[
T_n := \begin{cases}
A_n & \quad \text{if } T(0,x_n) \leq A_n \\
B_n & \quad \text{if } T(0,x_n) \geq B_n \\
T(0,x_n) & \quad \text{otherwise},
\end{cases}
\]
then some median of $\mathbb{E}[T_n \mid \vec{N}]$ is equal to $(A_n+B_n)/2$.
\end{lem}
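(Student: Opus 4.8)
\noindent\emph{Proof proposal.} Fix $n$ and write $T := T(0,x_n)$ and $\ell := \sqrt{\log\|x_n\|}$; note that $T<\infty$ a.s.\ since every edge weight is a.s.\ finite (indeed $T$ is bounded by the weight of any fixed path). For a real parameter $a$, the truncation of $T$ to the window $[a,a+\ell]$ — i.e.\ the variable $T_n$ of the lemma with $A_n=a$, $B_n=a+\ell$ — can be written as a median of three numbers,
\[
T^{(a)} := \operatorname{med}(a,\,T,\,a+\ell), \qquad\text{so that}\qquad T^{(a)} - a = \min\!\bigl(\ell,\,(T-a)^+\bigr).
\]
The plan is to produce $A_n$ by an intermediate-value argument in the parameter $a$, but balancing a sub/superlevel-set probability of $\mathbb{E}[T^{(a)}\mid\vec N]$ rather than a median of it directly, so as to sidestep the fact that the median need not depend continuously on $a$.

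Introduce $\xi_a := \mathbb{E}\bigl[\min(\ell,(T-a)^+)\mid \vec N\bigr] = \mathbb{E}[T^{(a)}\mid\vec N] - a \in [0,\ell]$. Two elementary facts will drive the argument: (i) for each fixed value of $T$, the map $a\mapsto \min(\ell,(T-a)^+)$ is non-increasing and $1$-Lipschitz, hence (after fixing suitable versions, e.g.\ constructed from rational $a$ and extended by continuity) $a\mapsto\xi_a$ is a.s.\ non-increasing and $1$-Lipschitz; and (ii) $\min(\ell,(T-a)^+)=\ell$ a.s.\ once $a<-\ell$ (using $T\ge0$), while it tends to $0$ a.s.\ as $a\to+\infty$, so by dominated convergence $\mathbb{E}\xi_a=\mathbb{E}\min(\ell,(T-a)^+)\to 0$. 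Now set
\[
H(a) := \mathbb{P}\bigl(\xi_a \ge \ell/2\bigr), \qquad H^+(a) := \mathbb{P}\bigl(\xi_a > \ell/2\bigr).
\]
By (i) the events $\{\xi_a\ge \ell/2\}$ and $\{\xi_a>\ell/2\}$ decrease (up to null sets) in $a$, so $H\ge H^+$ are both non-increasing; by (ii), $H(a)=1$ for $a<-\ell$, while Markov's inequality gives $H(a)\le 2\mathbb{E}\xi_a/\ell\to 0$ as $a\to+\infty$. Hence $a^* := \sup\{a: H(a)\ge 1/2\}$ is finite.

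I will show that $A_n := a^*$, $B_n := a^*+\ell$, $T_n := T^{(a^*)}$ does the job, i.e.\ that $a^*+\ell/2$ is a median of $\mathbb{E}[T^{(a^*)}\mid\vec N]=\xi_{a^*}+a^*$, which amounts to $H^+(a^*)\le 1/2\le H(a^*)$. For the left inequality: as $a\uparrow a^*$ one has $\xi_a\downarrow\xi_{a^*}$ by (i), so $\{\xi_{a^*}\ge\ell/2\}=\bigcap_{a<a^*}\{\xi_a\ge\ell/2\}$ and continuity of measure gives $H(a^*)=\lim_{a\uparrow a^*}H(a)\ge 1/2$. For the right inequality: as $a\downarrow a^*$ one has $\xi_a\uparrow\xi_{a^*}$, so $\{\xi_{a^*}>\ell/2\}\subseteq\bigcup_{a>a^*}\{\xi_a\ge\ell/2\}$ and continuity of measure gives $H^+(a^*)\le\lim_{a\downarrow a^*}H(a)\le 1/2$, the last step because $H(a)<1/2$ for every $a>a^*$. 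Thus $\mathbb{P}(\mathbb{E}[T^{(a^*)}\mid\vec N]\ge a^*+\ell/2)\ge 1/2$ and $\mathbb{P}(\mathbb{E}[T^{(a^*)}\mid\vec N]\le a^*+\ell/2)\ge 1/2$, which is the claim. The one genuinely delicate point is that one must carry both $H$ and $H^+$ (since $\xi_{a^*}$ may place positive mass exactly at $\ell/2$); the monotonicity and Lipschitz continuity in (i) are precisely what make the one-sided limits of $H$ at $a^*$ behave correctly and close this gap. Everything else — a.s.\ finiteness of $T$, the dominated-convergence limit, and the version-selection for the conditional expectations — is routine.
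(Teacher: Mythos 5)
Your proof is correct, and it reaches the same conclusion as the paper but by a noticeably cleaner route. The paper's argument normalizes by $\sqrt{\log\|x_n\|}$ to form $X_A$, establishes four properties (range, a.s.\ monotonicity, limits in probability, a.s.\ continuity along sequences), and then performs a somewhat delicate topological argument: it shows the set $\mathcal{M}$ of all medians (over all $A$) is such that $\mathcal{M}\cup\{0,1\}$ is closed, contains points on either side of $1/2$, and then derives a contradiction from assuming $1/2\notin\mathcal{M}$ by locating it in a complementary open interval and analyzing the supremum $\hat A$ of the relevant parameter set. You sidestep the whole topological discussion: by working with the sublevel/superlevel-set probabilities $H(a)=\mathbb{P}(\xi_a\ge\ell/2)$ and $H^+(a)=\mathbb{P}(\xi_a>\ell/2)$, defining $a^*=\sup\{a:H(a)\ge 1/2\}$, and verifying the median condition at $a^*$ directly via one-sided limits, you get the conclusion in one stroke. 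The key technical asset you exploit — and which the paper does not explicitly name, though it is implicit in the paper's property 4 — is the $1$-Lipschitz continuity in $a$ of $\min(\ell,(T-a)^+)$, and hence of $\xi_a$ after fixing a good version; this is precisely what makes the monotone a.s.\ limits $\xi_a\downarrow\xi_{a^*}$ (as $a\uparrow a^*$) and $\xi_a\uparrow\xi_{a^*}$ (as $a\downarrow a^*$) behave, and what lets the $H$/$H^+$ bookkeeping close without a separate closedness lemma. Your care in distinguishing $H$ from $H^+$ at the atom $\ell/2$ is exactly the right move and corresponds to the paper's careful handling of the interval $\mathcal{M}_{\hat A}$ containing both $a$ and $b$. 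In short: correct, and arguably a streamlining of the published argument.
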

\begin{proof}
If $\|x_n\|=1$, then the claim holds for any value of $A_n$, so we assume $n$ is large enough so that $\|x_n\|>1$. Because $n$ will not vary in the proof and $T_n$ is a function of $A_n$, we write $A_n = A$ and
\[
X_A = \frac{\mathbb{E}[T_n \mid \vec{N}]-A}{\sqrt{\log \|x_n\|}} = \mathbb{E}\left[ \frac{T_n - A}{\sqrt{\log \|x_n\|}} ~\bigg|~ \vec{N} \right].
\]
Then the family $(X_A)_{A \in \mathbb{R}}$ has the following properties:
\begin{enumerate}
\item $X_A$ takes values in $[0,1]$.
\item $X_A$ is non-increasing; that is, for $A \leq A'$, one has $X_A \geq X_{A'}$ a.s.
\begin{proof}
As a function of $A$, $T_n= T_n(A)$ is linear (with slope 1) for $A \leq T(0,x_n) - \sqrt{\log \|x_n\|}$, constant (slope 0) for $A \in [ T(0,x_n)  - \sqrt{\log \|x_n\|}, T(0,x_n) ]$, and linear (slope 1) for $A \geq T(0,x_n)$. Therefore $T_n - A$ is non-increasing and so is $X_A$.
\end{proof}
\item $\lim_{A\to-\infty} X_A = 1$ and $\lim_{A\to\infty} X_A = 0$ in probability.
\begin{proof}
Because $(T_n-A)/\sqrt{\log \|x_n\|}$ is equal to $1$ for $A$ negative enough and $0$ for $A$ positive enough, the bounded convergence theorem implies the statement.
\end{proof}
\item If $A_k \to A$ as $k \to \infty$, then $X_{A_k} \to X_A$ a.s.
\begin{proof}
Because $T_n=T_n(A)$ is continuous in $A$, so is $(T_n-A)/\sqrt{\log \|x_n\|}$. If $A_k \to A$, then $X_{A_k} \to X_A$ a.s.~by the bounded convergence theorem.
\end{proof}
\end{enumerate}

We now must show that for some $A$, $X_A$ has a median equal to $1/2$. Let $\mathcal{M}_A$ be the set of medians of $X_A$ and let $\mathcal{M} = \cup_A \mathcal{M}_A$. We first show that 
\begin{equation}\label{eq: step_1_median_a}
\mathcal{M}\cap [0,1/2) \neq \emptyset
\end{equation}
and
\begin{equation}\label{eq: step_1_median_b}
\mathcal{M} \cap (1/2,1] \neq \emptyset.
\end{equation}
The proofs of these are identical up to symmetry, so we only prove the first. By property 3, $\lim_{A \to \infty} X_A = 0$ in probability, so for large $A$, we have $\mathbb{P}(X_A \leq 1/4) > 1/2$. For such $A$, all medians of $X_A$ must lie in $[0,1/4]$, and so $\mathcal{M}_A \cap [0,1/2) \neq \emptyset$.

Next we show that if $(A_k)$ is a sequence of reals converging to a finite $A$ and $m_k$ is any median of $X_{A_k}$ such that $m_k \to m$ for some $m$, then
\begin{equation}\label{eq: median_converge}
m \text{ is a median of } X_A.
\end{equation}
To see why this is true, assume that $m$ is not a median of $X_A$. Then either $\mathbb{P}(X_A \leq m) < 1/2$ or $\mathbb{P}(X_A \geq m) < 1/2$. In the first case, pick $\delta>0$ such that $\mathbb{P}(X_A \leq m+\delta) < 1/2$ and $m+\delta$ is not an atom of the distribution of $X_A$. Then by item 4, 
\[
\mathbb{P}(X_{A_k} \leq m+\delta) \to \mathbb{P}(X_A \leq m+\delta) < 1/2.
\]
This means that for large $k$, any median of $A_k$ must be $\geq m+\delta$, and so $m_k$ does not converge to $m$, a contradiction. The other case is handled similarly.

Using \eqref{eq: median_converge}, we now prove that $\mathcal{M} \cup \{0,1\}$ is closed. To show this, let $(m_k)$ be a sequence in $\mathcal{M} \setminus \{0,1\}$ that converges to some $m$. Then $m_k$ is a median of some $X_{A_k}$. If $(A_k)$ has a subsequence that converges to $\infty$, then $X_{A_k} \to 0$ in probability along this subsequence and, as in the proof of \eqref{eq: step_1_median_a}, the full sequence $m_k \to 0$. Similarly, if $(A_k)$ has a subsequence that converges to $-\infty$, then $m_k \to 1$. Therefore we need only consider the case where $(A_k)$ is bounded, and by passing to a subsequence, we may assume that $(A_k)$ converges to some finite $A$. Then by \eqref{eq: median_converge}, $m$ is a median of $X_A$, and so $m \in \mathcal{M}$. This means that $\mathcal{M} \cup \{0,1\}$ is closed.

Due to the results of the last paragraph, $\left( \mathcal{M} \cup \{0,1\}\right)^c$ is a countable union of disjoint open intervals, and so if we assume, for a contradiction, that $1/2 \notin \mathcal{M}$, it must be in one such interval $(a,b)$ with $0 < a < b < 1$ (by \eqref{eq: step_1_median_a} and \eqref{eq: step_1_median_b}) and $a,b \in \mathcal{M}$. Since $b \in \mathcal{M}$, it is in some $\mathcal{M}_A$. Let
\[
\hat A = \sup\{A : b \in \mathcal{M}_A\}.
\]
Note that $\hat A$ is finite because for $A$ large, all medians of $X_A$ are smaller than $1/4 < b$. Furthermore, $b$ must be a median of $X_{\hat A}$. Indeed, for each $k$, take $A_k$ such that $b \in \mathcal{M}_{A_k}$ and so that $A_k \to \hat A$. Choosing $m_k = b$ for all $k$ and applying \eqref{eq: median_converge}, we get that $b \in \mathcal{M}_{\hat A}$.

Now take a sequence $(A'_k)$ such that $A'_k \downarrow \hat A$ and $A'_k > \hat A$ for all $k$. Then since $b \notin \mathcal{M}_{A'_k}$ for all $k$, the monotonicity of item 4 implies that all medians of all $X_{A'_k}$'s must be $\leq a$. Next, choosing $m_k$ as a median of $X_{A'_k}$ and restricting to a subsequence so that the $m_k$'s converge to some $m$, we find from \eqref{eq: median_converge} that $m$ must be a median of $X_{\hat A}$. Since $\mathcal{M}_{\hat A}$ contains $a$ and $b$ and must be an interval, it contains $1/2$. This means $1/2 \in \mathcal{M}$, a contradiction.
\end{proof}

From this point forward, we fix a value of $A_n$ (and therefore of $B_n$) for which a median of $\mathbb{E}[T_n \mid \vec{N}]$ is equal to $(A_n+B_n)/2$. Next we need to define a set of ``good'' values of $\vec{N}$. These are values of $\vec{N}$ for which we can make the antichain argument of step 2 work. The definition of this good set $\mathcal{N}_n$ below will include two conditions. The first is to ensure that with high probability, $T(0,x_n)$ is well within the window of truncation defined in Lemma~\ref{lem: truncation}, so that $T(0,x_n) = T_n$. The second is a geodesic condition, used to ensure that decreasing $\vec{N}$ significantly (and thereby reducing the number of hi-mode edges) will decrease the conditional expectation $\mathbb{E}[T_n \mid \vec{N}]$.

We define
\begin{align*}
M_n &= \frac{A_n+B_n}{2}, \\
I_n & = [A_n, B_n] = \left[ M_n - \frac{\sqrt{\log \|x_n\|}}{2}, M_n + \frac{\sqrt{\log\|x_n\|}}{2}\right],
\end{align*}
and
\[
I_n' = \left[ M_n - \frac{\sqrt{\log \|x_n\|}}{4}, M_n + \frac{\sqrt{\log\|x_n\|}}{4} \right].
\]
Recall that a geodesic from $x$ to $y$ is a minimizing path in the definition of $T(x,y)$. It is known \cite[Theorem~4.2]{ADH_book} that in two dimensions, a.s.~a geodesic exists between any given $x$ and $y$. Last, recall that $K>0$ is the number defining $A(j)$ in \eqref{eq: annulus_def}.

\begin{defin}\label{def: good}
For a sequence $(\xi_n)$ of reals converging to 0, $\mathcal{N}_n$ is the set of those vectors $\vec{N}$ such that the following two conditions hold.
\begin{enumerate}
\item $\mathbb{P}\left(T(0,x_n) \in I_n' \mid \vec{N} \right) > 1-\xi_n$.
\item Let $E_n(j)$ be the event that every geodesic from $0$ to $x_n$ has at least $K^{j-1}$ hi-mode edges in $A(j)$. For every $j \in \left[0.25 \log_K \|x_n\|_\infty, 0.75 \log_K \|x_n\|_\infty\right]$, 
\[
\mathbb{P}\left( E_n(j) \mid \vec{N} \right) > 1-\xi_n.
\]
\end{enumerate}
\end{defin}

The main proposition about the good values of $\vec{N}$ is as follows.
\begin{prop}\label{prop: good}
There is a $K$ sufficiently large and a sequence of reals $(\xi_n)$ with $\xi_n \to 0$ such that
\[
\mathbb{P}(\vec{N} \in \mathcal{N}_n) > 1-\xi_n.
\]
\end{prop}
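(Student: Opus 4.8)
The plan is to verify separately that each of the two conditions in Definition~\ref{def: good} holds for all but a $\xi_n$-fraction of the vectors $\vec N$, and then combine them by a union bound; a single sequence $\xi_n\to 0$ dominating both error terms, together with the $K$ produced in the treatment of the second condition, will give the conclusion of Proposition~\ref{prop: good}. In both cases the mechanism is the same: establish the corresponding \emph{unconditional} event (about $T(0,x_n)$, respectively about geodesics) with high probability, and then pass to the conditional probability given $\vec N$ via Markov's inequality, using $\mathbb{E}[\mathbb{P}(\cdot\mid\vec N)]=\mathbb{P}(\cdot)$.

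For the first condition it suffices to prove $\mathbb{P}(T(0,x_n)\notin I_n')\to 0$: then $\mathbb{P}\big(\mathbb{P}(T(0,x_n)\notin I_n'\mid\vec N)>\xi_n\big)\le \mathbb{P}(T(0,x_n)\notin I_n')/\xi_n$, which tends to $0$ if $\xi_n\to 0$ slowly enough. To get $\mathbb{P}(T(0,x_n)\notin I_n')\to 0$, recall from Assumption~\ref{ass: main_assumption} that $T(0,x_n)\in J_n$ with probability tending to $1$ and $|J_n|=w_n\sqrt{\log\|x_n\|}=o(\sqrt{\log\|x_n\|})$; so it is enough to know that $M_n$ lies within $o(\sqrt{\log\|x_n\|})$ of $J_n$, i.e.\ to rule out $T(0,x_n)\le M_n-\tfrac14\sqrt{\log\|x_n\|}$ with probability bounded away from $0$ along a subsequence (the other side is symmetric). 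If that failed, then combining with $\mathbb{P}(T(0,x_n)\in J_n)\to 1$ would force, along that subsequence and for $n$ large, $T(0,x_n)\le M_n-\tfrac18\sqrt{\log\|x_n\|}$ with probability tending to $1$; since $A_n=M_n-\tfrac12\sqrt{\log\|x_n\|}$, the description of $T_n$ in Lemma~\ref{lem: truncation} then gives $T_n\le M_n-\tfrac18\sqrt{\log\|x_n\|}$ with probability tending to $1$ as well. Writing $G$ for this last event and using $T_n\le B_n=M_n+\tfrac12\sqrt{\log\|x_n\|}$ always, a short computation gives $\mathbb{E}[T_n-M_n\mid\vec N]<0$ whenever $\mathbb{P}(G^c\mid\vec N)$ is below a fixed positive constant, and by Markov the latter holds with probability tending to $1$. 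Hence $\mathbb{P}(\mathbb{E}[T_n\mid\vec N]\ge M_n)\to 0$, contradicting the fact that $M_n$ is a median of $\mathbb{E}[T_n\mid\vec N]$, which is how $A_n$ was chosen. Thus $\mathbb{P}(T(0,x_n)\notin I_n')\to 0$.

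For the second condition the key input is the geodesic estimate, Lemma~\ref{lem: good_lemma_2}: there is a $K$ large enough and a sequence $\delta_n\to 0$ with $\sum_{j}\mathbb{P}(E_n(j)^c)\le\delta_n$, the sum over the (at most $\log\|x_n\|$) integers $j$ in the range $[0.25\log_K\|x_n\|_\infty,0.75\log_K\|x_n\|_\infty]$. Granting this, Markov applied to each $\mathbb{P}(E_n(j)^c\mid\vec N)$ and a union bound over $j$ give $\mathbb{P}\big(\exists\, j:\mathbb{P}(E_n(j)^c\mid\vec N)>\xi_n\big)\le \xi_n^{-1}\sum_j\mathbb{P}(E_n(j)^c)\le\delta_n/\xi_n$, which tends to $0$ if $\xi_n\to 0$ slowly enough. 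Taking $\xi_n\to 0$ to dominate the two error terms above then yields $\mathbb{P}(\vec N\in\mathcal N_n)>1-\xi_n$, proving Proposition~\ref{prop: good}.

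The main obstacle is Lemma~\ref{lem: good_lemma_2} itself. One works with no moment assumption on $t_e$, so the usual shape-theorem and geodesic-length estimates are unavailable, and one must also handle the possibility that the lo-mode subgraph is supercritical for ordinary bond percolation. The intended route is to decompose a geodesic's crossing of the annulus $A(j)$ into lo-mode ``runs'' separated by hi-mode edges and to show that each run can make only a bounded amount of radial progress relative to its length; this uses that the lo-mode edge set is subcritical for \emph{oriented} percolation (guaranteed by $F(I)<\vec{p_c}$ in \eqref{eq: perc_assumption}), together with the Marchand-type strict inequality for the time constant, $g(\mathrm{e}_1)>I$, and its annulus-crossing counterpart, all of which come with exponentially small failure probabilities that survive the union over the scales $j$. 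Choosing $K$ large then makes the per-scale yield of hi-mode edges at least $K^{j-1}$. This is the step of Step~1 that genuinely exhausts the strength of \eqref{eq: perc_assumption}, and is where all the work lies; the assembly described above is routine.
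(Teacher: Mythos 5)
The overall assembly matches the paper's proof: establish the unconditional versions of both items in Definition~\ref{def: good} with high probability, then transfer to the conditional probabilities given $\vec N$ via Markov's inequality. Your derivation of $\mathbb{P}(T(0,x_n)\notin I_n')\to 0$ from Assumption~\ref{ass: main_assumption} and the median property of $M_n$ is a correct reorganization of the paper's Lemma~\ref{lem: good_lemma_1}. One small economy you miss: since $E_n=\cap_j E_n(j)$, one has $\mathbb{P}(E_n(j)^c\mid\vec N)\le\mathbb{P}(E_n^c\mid\vec N)$ for every $j$, so a single Markov bound on $\mathbb{P}(E_n^c\mid\vec N)$ handles all $j$ simultaneously, and only $\mathbb{P}(E_n^c)\to 0$ is needed; your union bound over $j$ instead requires $\sum_j\mathbb{P}(E_n(j)^c)\to 0$, which forces the rate to beat $1/\log\|x_n\|$. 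That is achievable with the paper's estimates but is an extra and unnecessary check.

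The genuine gap is in your sketch of Lemma~\ref{lem: good_lemma_2}, which is the heart of the argument. The ``lo-mode run'' decomposition with oriented-percolation subcriticality does not work here: $F(d_0)<\vec p_c$ controls \emph{directed} lo-mode paths, while the lo-mode stretches of a geodesic are not directed, and nothing in \eqref{eq: perc_assumption} rules out $p_c<F(d_0)<\vec p_c$, i.e.\ a lo-mode subgraph that is supercritical for ordinary percolation and thus supports arbitrarily long meandering lo-mode paths. Moreover, the inequality $g(\unitv_1)>I$ alone yields only an \emph{upper} bound on the number $N$ of hi-mode edges on a geodesic (from $T(\Gamma)\ge I|\Gamma|+(d_0-I)N$), not the lower bound you need. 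The paper's actual mechanism is different: set $t_e'=t_e+1$ on hi-mode edges and $t_e'=t_e$ otherwise; Marchand's strict-comparison theorem gives $g'\ge g+\eta\|\cdot\|$; the elementary inequality $T'(\tilde 0,\tilde x)\le T(\tilde 0,\tilde x)+N(0,x)$, combined with the Cerf--Th\'eret weak shape theorem (valid with no moment assumption) applied to both $T$ and $T'$, forces $N(0,x)\ge(\eta/3)\|x\|$; Kesten's hole estimate removes the tildes; and a geodesic-length bound localizes a $K^{j-1}$ share of those hi-mode edges inside $A(j)$ when $K$ is large. The oriented-percolation route you sketch is closer to what the authors' remark reserves for the flat-edge regime $F(I)\ge\vec p_c$, which \eqref{eq: perc_assumption} excludes.
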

\begin{proof}
The proof of this proposition relies entirely on two lemmas.
\begin{lem}\label{lem: good_lemma_1}
One has
\[
\mathbb{P}(T(0,x_n) \in I_n') \to 1 \text{ as } n \to \infty.
\]
\end{lem}
\begin{proof}
Assume for a contradiction that $\mathbb{P}(T(0,x_n) \in I_n')$ does not converge to 1. So there is $\epsilon>0$ such that for some subsequence $(n_k)$, one has $\mathbb{P}(T(0,x_{n_k}) \notin I_{n_k}') > \epsilon$ for all $k$. This implies that for infinitely many $k$,
\begin{equation}\label{eq: good_possibility_1}
\mathbb{P}\left( \frac{T_{n_k} - A_{n_k}}{\sqrt{\log \|x_{n_k}\|}} < \frac{1}{4} \right) > \frac{\epsilon}{2},
\end{equation}
or $\mathbb{P}\left( (T_{n_k} - A_{n_k})/\sqrt{\log \|x_{n_k}\|} > 3/4 \right) > \epsilon/2$. Both cases are similar, so by restricting to a further subsequence $(n_k)$, we will assume that \eqref{eq: good_possibility_1} holds for all $k$.

Assumption~\ref{ass: main_assumption} implies that
\begin{equation}\label{eq: good_probability_convergence}
\frac{T(0,x_n) - a_n}{\sqrt{\log \|x_n\|}} \to 0 \text{ in probability}.
\end{equation}
We then write
\[
\frac{T_n-A_n}{\sqrt{\log \|x_n\|}} = \left( \left( \frac{T(0,x_n)-A_n}{\sqrt{\log \|x_n\|}} \right) \vee 0\right) \wedge 1 = \left( \left( \frac{a_n-A_n}{\sqrt{\log \|x_n\|}} + \frac{T(0,x_n)-a_n}{\sqrt{\log \|x_n\|}} \right) \vee 0 \right) \wedge 1.
\]
Due to \eqref{eq: good_probability_convergence}, if we choose a further subsequence $(n_k)$ so that 
\[
\left( \left( \frac{a_{n_k}-A_{n_k}}{\sqrt{\log \|x_{n_k}\|}}\right) \vee 0 \right) \wedge 1 \to \hat a \in [0,1],
\]
then
\[
\frac{T_{n_k}-A_{n_k}}{\sqrt{\log \|x_{n_k}\|}} \to \hat a \text{ in probability}.
\]
However, by \eqref{eq: good_possibility_1}, we obtain $\hat a \in [0,1/4]$. This means that
\[
\mathbb{P}\left( \frac{T_{n_k} - A_{n_k}}{\sqrt{\log \|x_{n_k}\|}} \leq \frac{1}{3} \right) \to 1,
\]
and this is a contradiction, since $1/2$ is a median of $(T_{n_k}-A_{n_k})/\sqrt{\log \|x_{n_k}\|}$.
\end{proof}

\begin{lem}\label{lem: good_lemma_2}
Writing $E_n$ for the intersection 
\[
E_n = \bigcap_{j \in [0.25 \log_K \|x_n\|_\infty, 0.75 \log_K \|x_n\|_\infty]} E_n(j),
\]
one has for some $K$ sufficiently large,
\[
\mathbb{P}(E_n)  \to 1 \text{ as } n \to \infty.
\]
\end{lem}
\begin{proof}
This result is not so hard to prove under stricter conditions on the edge-weights. Because we are making no moment assumption on the $t_e$'s, we must use more involved constructions of Cerf and Th\'eret \cite{CT}. Let $M$ be any number such that $\mathbb{P}(t_e\leq M) > p_c=1/2$ and say that an edge is open if it has weight $\leq M$ (closed otherwise). An (open) cluster is a maximal set of vertices such that any two vertices in the set are connected by a path whose edges are open. There is exactly one infinite cluster \cite[Theorem~8.1]{grimmett}, and so for each vertex $x$, we select a vertex $\tilde x$ to be the one of minimal distance from $x$ in the infinite cluster. If there is more than one candidate for $\tilde x$, then we break the tie in some deterministic way. Then, following \cite[Eq.~(1)]{CT}, we define $\tilde T(x,y) = T(\tilde x, \tilde y)$ for $x,y \in \mathbb{Z}^2$. Defining the time constant as
\[
\tilde g(x) = \lim_{n \to \infty} \frac{\tilde T(0,nx)}{n} \text{ a.s.~and in }L^1 \text{ for } x \in \mathbb{Z}^2,
\]
which \cite[Theorem~1]{CT} says exists, the statement of \cite[Theorem 3(i)]{CT} is a sort of shape theorem for $\tilde T$:
\begin{equation}\label{eq: weak_shape}
\lim_{n \to \infty} \sup_{x \in \mathbb{Z}^2,~\|x\|_1 \geq n} \left| \frac{\tilde T(0,x) - \tilde g(x)}{\|x\|_1} \right| = 0 \text{ a.s.}
\end{equation}
Last, \cite[Theorem~4]{CT} states that the limit $\tilde g$ is the same as that for $T$. In other words, if we define
\begin{equation}\label{eq: g_def}
g(x) = \lim_{n \to \infty} \frac{T(0,nx)}{n} \text{ in probability for } x \in \mathbb{Z}^2,
\end{equation}
then $g(x) = \tilde g(x)$ for all $x$.

Given these preparations, we first aim to show that there exists $\delta>0$ such that, if $\tilde F_\delta(x)$ is the event that every geodesic from $\tilde 0$ to $\tilde x$ contains at least $\delta\|x\|$ hi-mode edges, then
\begin{equation}\label{eq: geodesic_step_1}
\mathbb{P}\left( \cap_{\|x\| \geq R} \tilde F_\delta(x) \text{ occurs for }R \text{ large enough} \right) = 1.
\end{equation}
(The geodesic referred to is the $T$-geodesic between $\tilde 0$ and $\tilde x$.) To do this, we define an auxiliary set $(t_e')$ of edge weights:
\[
t_e' = \begin{cases}
t_e + 1 & \quad \text{if } t_e \text{ is hi-mode} \\
t_e & \quad \text{otherwise}.
\end{cases}
\]
Then $(t_e')$ stochastically dominates $(t_e)$ and, in particular, satisfies the concave ordering condition: for every concave increasing function $\Phi : \mathbb{R} \to \mathbb{R}$ and any edge $e$,
\[
\mathbb{E}\Phi(t_e) \leq \mathbb{E}\Phi(t_e'),
\]
so long as these expectations exist. So by \cite[Theorem~1.5]{Marchand}, one has $g(x) < g'(x)$ for all $x \neq 0$, where $g'$ is the limit defined as in \eqref{eq: g_def}, but for $t_e'$ instead of $t_e$. Because (see \cite[Sec.~2.3]{ADH_book}) both $g$ and $g'$ are restrictions of norms to $\mathbb{Z}^2$ (the condition \eqref{eq: perc_assumption} implies that neither are degenerate --- see \cite[Theorem~2.5]{ADH_book}), there exists $\eta>0$ such that
\begin{equation}\label{eq: bigger}
g'(x) \geq g(x) + \eta \|x\| \text{ for all } x \in \mathbb{Z}^2.
\end{equation}

Now, applying \eqref{eq: weak_shape} to both $T$ and $T'$, we obtain a random (a.s.~finite) number $R$ such that if $\|x\| \geq R$ then
\[
T(\tilde 0, \tilde x) < g(x) + \frac{\eta}{3} \|x\|,
\]
and
\[
T'(\tilde 0, \tilde x) > g'(x) - \frac{\eta}{3} \|x\|.
\]
(In applying \eqref{eq: weak_shape}, we need to choose two values of $M$: one for $T$ and one for $T'$ to construct the open edges for both sets of weights. We choose these values so that the open edges for both sets of weights are the same. Specifically, we can take $M> d_0$ (from \eqref{eq: d_0_def}) for $(t_e)$ and $M+1$ for $(t_e')$.) If we let $N(x,y)$ be the minimal number of hi-mode edges on any $T$-geodesic from $\tilde x$ to $\tilde y$, then for the $R$ above and $\|x\| \geq R$,
\[
g'(x) - \frac{\eta}{3} \|x\| < T'(\tilde 0, \tilde x) \leq T(\tilde 0 , \tilde x) + N(0,x) < g(x) + \frac{\eta}{3} \|x\| + N(0,x),
\]
or, using \eqref{eq: bigger},
\[
N(0,x) \geq g'(x) - g(x) - \frac{2\eta}{3}\|x\| \geq \frac{\eta}{3} \|x\|.
\]
This proves \eqref{eq: geodesic_step_1} with $\delta = \eta/3$.

Next we use \eqref{eq: geodesic_step_1} to prove a similar statement for points without tildes. That is, if we define $F_\delta(x)$ to be the event that every geodesic from $0$ to $x$ contains at least $\delta\|x\|$ number of hi-mode edges, then there exists $\delta>0$ such that
\begin{equation}\label{eq: geodesic_step_2}
\mathbb{P}\left( \cap_{\|x\| \geq R} F_\delta(x) \text{ occurs for }R \text{ large enough} \right) = 1.
\end{equation}
To prove this, we will show that for any $\eta>0$,
\begin{equation}\label{eq: geodesic_step_2_a}
\mathbb{P}\left( \cap_{\|x\| \geq R} F_\delta(x) \text{ occurs for } R \text{ large enough} \right) > 1-\eta.
\end{equation}
The main tool to prove this is a bound on the size of ``holes'' in the infinite cluster. Although this is particularly easy to do in two dimensions, it would lead us into bond percolation arguments, so instead we use a result of Kesten that would be applicable in any dimension. The following is a slight modification of \cite[Theorem~2.24]{aspects} and is given as \cite[Lemma~6.3]{ADH_gamma}. If $M$ is sufficiently large (so that $\mathbb{P}(t_e \leq M)$ is sufficiently close to 1), there exists $c_3>0$ such that for every $n$,
\[
\mathbb{P}\left(\text{each path from }0 \text{ to } \partial [-n,n]^2 \text{ intersects the infinite cluster}\right) > 1-e^{-c_3n}.
\]
Here, $\partial [-n,n]^2$ is the set of vertices of $[-n,n]^2$ with a neighbor outside of $[-n,n]^2$. So choose $n_0$ such that if $G_n$ is the event in this inequality, then
\begin{equation}\label{eq: geodesic_piece_1}
\mathbb{P}(G_{n_0}) > 1-\eta/3.
\end{equation}
Next, by translation invariance, the above estimate also implies
\begin{align*}
\mathbb{P}\bigg( \text{each path from } x \text{ to } x+ \partial [-\|x\|/2,\|x\|/2]^2 \text{ intersects the }&\text{infinite cluster}\bigg) \\
&> 1-e^{-c_3 \frac{\|x\|}{2}}.
\end{align*}
Therefore for all sufficiently large $R$,
\begin{align}
\mathbb{P}\left( \begin{array}{c}\text{for all }x \text{ with } \|x\| \geq R, \text{ each path from } x \text{ to } \\ x+ \partial [-\|x\|/2,\|x\|/2]^2 \text{ intersects the infinite cluster} \end{array} \right) &> 1- \sum_{\|x\| \geq R} e^{-c_3\frac{\|x\|}{2}} \nonumber \\
&> 1-\frac{\eta}{3}. \label{eq: geodesic_piece_2}
\end{align}

By translating the event in \eqref{eq: geodesic_step_1}, for each fixed $y \in [-n_0,n_0]^2$,
\[
\mathbb{P}\left( \cap_{\|x\| \geq R} \tilde F_{\delta/2}(y,x) \text{ occurs for all large }R\right) = 1,
\]
where $\tilde F_\delta(y,x)$ is the event that every geodesic from $\tilde y$ to $\tilde x$ contains at least $\delta \|x\|/2$ hi-mode edges. (Translating the event gives at least $\delta \|x-y\|$ hi-mode edges, but if $R$ is large and $y \in [-n_0,n_0]^2$, this number is larger than $\delta\|x\|/2$.) Therefore for all $R$ sufficiently large (depending on $n_0$),
\begin{equation}\label{eq: geodesic_piece_3}
\mathbb{P}\left( \cap_{y \in [-n_0,n_0]^2} \cap_{\|x\| \geq R} \tilde F_{\delta/2}(y,x) \right) > 1-\frac{\eta}{3}.
\end{equation}

Now suppose that the events in \eqref{eq: geodesic_piece_1}, \eqref{eq: geodesic_piece_2}, and \eqref{eq: geodesic_piece_3} occur (this has probability $>1-\eta$); we will show that for any $x$ with $\|x\| \geq 2R$ (assuming $R$ is sufficiently large), every geodesic from $0$ to $x$ contains at least $\delta\|x\|/4$ many hi-mode edges. So let $\Gamma$ be a geodesic from $0$ to such an $x$. Since the event in \eqref{eq: geodesic_piece_1} occurs, $\Gamma$ must contain a vertex $w_0$ of the infinite cluster in $[-n_0,n_0]^2$. Because the event in \eqref{eq: geodesic_piece_2} occurs, it must also contain a vertex $w_1$ of the infinite cluster in $x+[-\|x\|/2,\|x\|/2]^2$. Because the event in \eqref{eq: geodesic_piece_3} occurs, the segment of $\Gamma$ from $w_0$ to $w_1$ (which is a geodesic) must contain at least $\delta \|w_1\|/2 \geq \delta \|x\|/4$ many hi-mode edges. (Here we use that $\tilde w_0 = w_0$, $\tilde w_1 = w_1$, and $\|w_1\| \geq \|x\|/2 \geq R$.) This completes the proof of \eqref{eq: geodesic_step_2_a} and therefore of \eqref{eq: geodesic_step_2} (with $\delta/4$ in place of $\delta$).

Finally, we use \eqref{eq: geodesic_step_2} to show that if $K$ is large enough, then the statement of the lemma holds: $\mathbb{P}(E_n) \to 1$ as $n \to \infty$. For this, we need to use a geodesic length estimate from \cite[Theorem~4.9]{ADH_book}. It states that under assumption \eqref{eq: perc_assumption}, there exist $C_5,c_4>0$ such that
\[
\mathbb{P}\left( \text{each geodesic from } 0 \text{ to }x \text{ has at most }C_5\|x\| \text{ many edges}\right) \geq 1-e^{-c_4\|x\|^{1/2}}.
\]
So, given $\eta>0$, we can pick $R>0$ such that
\begin{equation}\label{eq: last_event_1}
\mathbb{P}\left( \cap_{\|x\| \geq R} \{\text{each geodesic from }0 \text{ to }x \text{ has at most }C_5\|x\| \text{ many edges}\}\right) > 1-\frac{\eta}{2}.
\end{equation}
By \eqref{eq: geodesic_step_2}, we can further increase $R$ so that
\begin{equation}\label{eq: last_event_2}
\mathbb{P}\left( \cap_{\|x\| \geq R} F_\delta(x)\right) > 1-\frac{\eta}{2}.
\end{equation}
Now pick $n$ so large that for all $j \in [0.25\log_K \|x_n\|_\infty-1, 0.75 \log_K\|x_n\|_\infty]$, each endpoint of an edge in $A(j)$ has Euclidean norm at least $R$. Suppose that the above two events occur (this has probability $>1-\eta$); we will show that $E_n$ occurs. Let $\Gamma$ be a geodesic from $0$ to $x_n$, let $w_0$ be the last endpoint of an edge in $A(j-1)$ that $\Gamma$ touches, and let $w_1$ be the vertex on $\Gamma$ directly before the first endpoint of an edge in $A(j+1)$ that it touches. Then the segment $\Gamma_0$ of $\Gamma$ from $0$ to $w_0$ is a geodesic, and the segment $\Gamma_1$ of $\Gamma$ from $w_0$ to $w_1$ is a geodesic whose edges are all in $A(j)$. Because $\|w_1\| \geq R$ and the event in \eqref{eq: last_event_2} occurs, the concatenation of $\Gamma_0$ and $\Gamma_1$ contains at least $\delta\|w_1\|$ many hi-mode edges. Because $\|w_0\| \geq R$ and the event in \eqref{eq: last_event_1} occurs, $\Gamma_0$ contains at most $C_5\|w_0\|$ many edges. Therefore
\[
\Gamma_1 \text{ contains at least } \delta \|w_1\| - C_5 \|w_0\| \text{ many hi-mode edges}.
\]
Because $w_1$ is adjacent to $A(j+1)$, one has $\|w_1\| \geq \|w_1\|_\infty = K^j-1$. Because $w_0$ is in $A(j-1)$, one has $\|w_0\| \leq \sqrt{2} \|w_0\|_\infty \leq \sqrt{2} K^{j-1}$. Therefore $\Gamma_1$ contains at least $\delta (K^j-1) - \sqrt{2} C_5 K^{j-1}$ many hi-mode edges. If $K$ is sufficiently large (note that $\delta$ is fixed), this number is $\geq K^{j-1}$. All these edges lie in $A(j)$ and are on $\Gamma$, so this completes the proof.
\end{proof}

Given Lemmas~\ref{lem: good_lemma_1} and \ref{lem: good_lemma_2}, we can complete the proof of Proposition~\ref{prop: good}. Taken together, they imply that we can find a sequence $(\epsilon_n)$ with $\epsilon_n \to 0$ such that $\mathbb{P}(E_n,~T(0,x_n) \in I_n') > 1-\epsilon_n.$ Now set $\xi_n = \sqrt{\epsilon_n}$ and define $U_n$, a function of $\vec{N}$, as
\[
U_n(\vec{N}) = \mathbb{P}\left( E_n,~T(0,x_n) \in I_n' \mid \vec{N} \right).
\]
Then
\begin{align*}
1- \xi_n^2 < \mathbb{P}(E_n,~ T(0,x_n) \in I_n') = \mathbb{E} U_n &= \mathbb{E}U_n \mathbf{1}_{\{U_n \leq 1-\xi_n\}} + \mathbb{E}U_n \mathbf{1}_{\{U_n > 1-\xi_n\}} \\
&\leq (1-\xi_n)\mathbb{P}(U_n \leq 1-\xi_n) + \mathbb{P}(U_n > 1-\xi_n).
\end{align*}
Rearranging, we obtain $\mathbb{P}(U_n \leq 1-\xi_n) < \xi_n$. This means that the set of $\vec{N}$ such that the probability defining $U_n$ is $>1-\xi_n$ has probability $>1-\xi_n$. For each $\vec{N}$ in this set, item 1 of Definition~\ref{def: good} holds. For item 2, the probability, for a fixed $j$, is no smaller than the probability of the event intersected over all relevant $j$'s. By definition of $E_n$, this probability is also $>1-\xi_n$, and this completes the proof.
\end{proof}

\subsection{Step 2. Small ball for conditional mean.}\label{sec: step_2}

In this step, we show that that the conditional expectation $\mathbb{E}[T_n \mid \vec{N}]$ satisfies a ``small ball'' probability bound: the probability that it lies in a small interval is bounded by $(\log \|x_n\|)^{-1/2}$.

\begin{prop}\label{prop: small_ball}
There exist $\epsilon, C_1 >0$ such that
\[
\sup_{r \in I_n'} \mathbb{P}\left( \mathbb{E}[T_n \mid \vec{N}] \in [r,r+\epsilon],~ \vec{N} \in \mathcal{N}_n \right) \leq \frac{C_1}{\sqrt{\log \|x_n\|}}.
\]
\end{prop}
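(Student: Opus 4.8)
The plan is to encode the vector $\vec{N} = (N_1, N_2, \dots)$ in terms of i.i.d.\ Bernoulli random variables, so that $\mathbb{E}[T_n \mid \vec{N}]$ becomes (on the relevant index range) a function of a point in a hypercube, and then argue that for each $r$, the set of hypercube points realizing the event $\{\mathbb{E}[T_n \mid \vec{N}] \in [r,r+\epsilon]\} \cap \{\vec{N} \in \mathcal{N}_n\}$ is an antichain. First I would fix attention on the annuli with index $j$ in the range $[0.25 \log_K \|x_n\|_\infty, 0.75 \log_K \|x_n\|_\infty]$ appearing in Definition~\ref{def: good}; there are of order $\log \|x_n\|$ of them. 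For each such $j$, since $N_j$ is binomial with parameters $\#A(j)$ and $1-F(d_0)$ and $\#A(j)$ grows geometrically, with probability bounded below $N_j$ lies within, say, one standard deviation $\sigma_j \asymp K^j$ of its mean; on that event I can couple $N_j$ to a single Bernoulli$(1/2)$ variable $b_j$ by splitting the central band of values of $N_j$ into two halves (``low half'' $b_j = 0$, ``high half'' $b_j = 1$), each of probability bounded below. The precise value of $N_j$ within its half, together with all $N_j$ for $j$ outside the range, is recorded in an auxiliary variable. Thus $\vec{N}$ is recovered from $(b_j)_j$ (a point in $\{0,1\}^m$ with $m \asymp \log \|x_n\|$) together with the auxiliary variable, and we may condition on the auxiliary variable throughout.

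The heart of the argument is Lemma~\ref{lem:showanti}: I claim that, after conditioning on the auxiliary data, the set $S_r \subset \{0,1\}^m$ of configurations $(b_j)$ for which $\vec{N} \in \mathcal{N}_n$ \emph{and} $\mathbb{E}[T_n \mid \vec{N}] \in [r, r+\epsilon]$ is an antichain, provided $\epsilon$ is a small enough constant. The monotonicity input is that $\mathbb{E}[T_n \mid \vec{N}]$ is nondecreasing in $\vec{N}$ (more hi-mode edges only increase passage times, hence increase $T_n$ and its conditional mean), so flipping some $b_j$ from $0$ to $1$ can only increase $\mathbb{E}[T_n \mid \vec{N}]$. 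The \emph{strict} increase needed to rule out two comparable elements both lying in an interval of length $\epsilon$ comes from condition~2 of Definition~\ref{def: good}: on the good set, with conditional probability $> 1 - \xi_n$ every geodesic uses at least $K^{j-1}$ hi-mode edges in $A(j)$. Raising $b_j$ from $0$ to $1$ raises $N_j$ by at least a constant multiple of $\sigma_j \asymp K^j$; one shows that the number of hi-mode edges \emph{actually on the geodesic} in $A(j)$ then increases in expectation by at least a constant times $K^{j-1}/K^j \cdot (\text{new hi-mode edges}) \gtrsim 1$ (each of the $\gtrsim K^j$ edges newly switched to hi-mode has a chance $\gtrsim K^{j-1}/K^j$ of sitting on the geodesic, by condition~2 and exchangeability of the permutation $\pi_j$), so $\mathbb{E}[T_n \mid \vec{N}]$ jumps by at least a constant $c > \epsilon$. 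Here condition~1 of Definition~\ref{def: good} is used to guarantee $T_n = T(0,x_n)$ with high conditional probability, so the truncation does not destroy this monotone-increase estimate. Hence no two comparable points can both lie in $S_r$: $S_r$ is an antichain.

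Given that $S_r$ is an antichain in $\{0,1\}^m$, Sperner's theorem bounds $|S_r| \le \binom{m}{\lfloor m/2 \rfloor}$. Since each $b_j$ is Bernoulli$(1/2)$ and independent, the uniform-on-the-cube measure is $2^{-m}$ per point, so $\mathbb{P}((b_j)_j \in S_r \mid \text{aux}) \le 2^{-m}\binom{m}{\lfloor m/2\rfloor} \asymp m^{-1/2} \asymp (\log \|x_n\|)^{-1/2}$, uniformly in $r$ and in the auxiliary data; integrating out the auxiliary variable preserves the bound. This yields the Proposition with $C_1$ an absolute constant (depending on $K$, $F$, $d_0$).

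I expect the main obstacle to be the strict-monotonicity step: making rigorous the claim that flipping one Bernoulli $b_j$ forces $\mathbb{E}[T_n \mid \vec{N}]$ to increase by at least a fixed constant $c$, uniformly over the good set and over $j$ in the allowed range. This requires carefully combining (i) the quantitative lower bound $N_j^{(1)} - N_j^{(0)} \gtrsim \sigma_j$ on the jump in the count, (ii) a coupling/monotone-rearrangement of the two configurations differing only in which edges of $A(j)$ are hi-mode, so that the geodesic comparison is valid, and (iii) the geodesic event $E_n(j)$ from condition~2 to convert ``many extra hi-mode edges in $A(j)$'' into ``extra hi-mode edges on the geodesic,'' while controlling the error from the rare event where $E_n(j)$ fails or $T_n \ne T(0,x_n)$ (absorbed using $\xi_n \to 0$ and $\epsilon$ small). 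Choosing $\epsilon$ smaller than the resulting constant $c$ then closes the antichain argument.
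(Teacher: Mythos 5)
Your high-level plan --- encode $\vec{N}$ via a Bernoulli decomposition, show that the small-ball event restricted to $\mathcal{N}_n$ is an antichain in the resulting hypercube, and apply Sperner's theorem --- is exactly the paper's strategy, and you correctly identify the roles of both conditions in Definition~\ref{def: good}. However, the Bernoulli coupling as you describe it has a genuine gap. You couple $N_j$ to $b_j$ only after conditioning on $N_j$ landing in a central band, and your auxiliary variable records ``the precise value of $N_j$ within its half.'' That auxiliary variable is not independent of $b_j$ (knowing it presupposes which half was chosen), so there is no well-defined configuration obtained by flipping $b_j$ while holding the auxiliary data fixed --- the antichain comparison cannot even be set up. Moreover, your key quantitative input, that flipping $b_j$ raises $N_j$ by $\gtrsim \sigma_j$, is simply not provided by splitting the interval $[\mu_j-\sigma_j,\mu_j+\sigma_j]$ into halves: both halves contain values near $\mu_j$, so the jump can be arbitrarily small. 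You flag this jump bound as ``the main obstacle'' at the end, but your construction does not supply it. The paper resolves both issues at once by writing $N_j \stackrel{d}{=} X_j + \eta_j Y_j$ with $X_j$ and $Z_j = X_j+Y_j$ \emph{independent} draws from the lower and upper halves of the law of $N_j$; the auxiliary data is $(X_j,Z_j)_j$, recording \emph{both} potential values and independent of $(\eta_j)$, so the flip is well-defined and monotone. The active index set is then restricted to $\mathcal{I}_n=\{j: X_j\leq\mu_j-\sigma_j,\ Z_j\geq\mu_j+\sigma_j\}$, forcing $Y_j\geq 2\sigma_j$; a CLT plus a binomial tail bound then gives $\#\mathcal{I}_n\gtrsim\log\|x_n\|$ with probability $1-e^{-c\log\|x_n\|}$, which is what makes Sperner's bound produce the final $(\log\|x_n\|)^{-1/2}$.

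A smaller issue: your arithmetic for the geodesic-intersection heuristic is off. A newly de-promoted hi-mode edge of $A(j)$ lies on the geodesic with chance $\asymp K^{j-1}/K^{2j}$ (geodesic hi-mode edges over total hi-mode edges in the annulus), not $K^{j-1}/K^{j}$; multiplied by the $\gtrsim K^j$ de-promoted edges this gives an expected count $\gtrsim K^{-1}$, a constant depending on $K$. The paper makes this rigorous not via an expected-count bound but by conditioning on the \emph{set} of hi-mode edges (together with $P$ and $(\pi_k)_{k\neq j}$), so that the de-promoted edges form a uniformly random subset; it then lower-bounds the probability that at least one lands on the geodesic by a permutation computation, and on that event lower-bounds the decrement of the integrand by $d_0 - \mathbb{E}[t_e\mid t_e\leq d_0]>0$. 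Your ``expected number $\gtrsim 1$'' intuition is in the right spirit, but converting it into a uniform lower bound on the drop in $\mathbb{E}[T_n\mid\vec{N}]$ requires this conditional-probability argument rather than a bare expectation.
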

\begin{proof}
At the risk of increasing the constant $C_1$, we will assume in the proof that $n$ is large.

The idea of the proof, inspired by that of \cite[Theorem~3.1]{AGKW}, is to represent each $N_j$ in the vector $\vec{N}$ as
\begin{equation}\label{eq: decomposition}
N_j \stackrel{d}{=} X_j + \eta_j Y_j,
\end{equation}
where $X_j \geq 0$ and $Y_j \geq 0$ are independent of $\eta_j$, which is a Bernoulli$(1/2)$ variable; that is, $\mathbb{P}(\eta_j = 0) = 1/2 = \mathbb{P}(\eta_j=1)=1/2$. Once we condition on all of the values of the $X_j$'s and $Y_j$'s, we view $\mathbb{E}[T_n \mid \vec{N}]$ as only a function of the $\eta_j$'s, and show that flipping one of the $\eta_j$'s from $1$ to $0$ (so that $N_j$ ``jumps down'' by the value $Y_j$) often decreases $\mathbb{E}[T_n \mid \vec{N}]$ by at least some $\epsilon$. This will be sufficient to show that the set whose probability we consider in the proposition is an ``antichain,'' and antichain probability estimates will complete the proof.

To find this representation, we let $F_j$ be the distribution function of $N_j$, and let $U_j$ be a uniform random variable on $[0,1]$. Then $F_j^{-1}(U_j)$ is distributed as $N_j$ (here $F_j^{-1}(t) = \inf\{x : F_j(x) \geq t\}$ is a generalized inverse of $F_j$). Let $X_j$ be a random variable with the same distribution as $N_j$ conditioned on $\{U_j \leq 1/2\}$ and $Z_j$ an independent random variable with the same distribution as $N_j$ conditioned on $\{U_j \geq 1/2\}$. Last, let $\eta_j$ be an independent Bernoulli$(1/2)$ random variable. Then for $Y_j = Z_j - X_j$, \eqref{eq: decomposition} holds. We will assume that the collection $\{X_1, X_2, \dots, Z_1, Z_2, \dots, \eta_1, \eta_2, \dots\}$ is formed of mutually independent random variables.

From now on, we think of $\mathbb{E}[T_n \mid \vec{N}]$ as a function of the vector $(X_1 + \eta_1Y_1, X_2 + \eta_2Y_2, \dots)$. We will want to consider only indices $j$ in which the corresponding jump value $Y_j$ is large enough (to ensure it affects the passage time). We also want to ensure $j$ is not too large or small, to avoid degeneracies. Therefore we define the set of indices
\[
\mathcal{I}_n = \left\{j \in \left[0.25 \log_K\|x_n\|_\infty, 0.75 \log_K\|x_n\|_\infty\right] : X_j \leq \mu_j - \sigma_j, Z_j \geq \mu_j + \sigma_j\right\}.
\]
Here, $\mu_j = \mathbb{E}N_j$, $\sigma_j = \sqrt{\mathrm{Var}~N_j}$, and we recall again that $K$ is the number defining the annuli $A(j)$ in \eqref{eq: annulus_def}. Then for $r \in I_n'$,
\begin{align}
\mathbb{P}\bigg( \mathbb{E}&[T_n \mid \vec{N}] \in [r,r+\epsilon],~\vec{N} \in \mathcal{N}_n\bigg) \nonumber \\
&= \mathbb{E} \left[ \mathbb{P}\left( \mathbb{E}[T_n \mid \vec{N}] \in [r,r+\epsilon],~\vec{N} \in \mathcal{N}_n \mid (X_i,Z_i)_{i \geq 1}, (\eta_i)_{i \notin \mathcal{I}_n}\right)\right]. \label{eq: to_pick_up}
\end{align}
Here we have conditioned on all the values of $X_i$ and $Z_i$ (which themselves determine the set $\mathcal{I}_n$), and also on the values of the Bernoullis $\eta$ outside $\mathcal{I}_n$. By independence, in the inner conditional probability, we can now view $\mathbb{E}[T_n \mid \vec{N}]$ as a function of only $(\eta_i)_{i \in \mathcal{I}_n}$ (with all other variables fixed).

Next we need the notion of an antichain. Here, for convenience, it is defined using the ordering $(\leq)$, which is opposite of what is normally used $(\geq$). 
\begin{defin}
A subset $\Xi$ of $\{0,1\}^n$ is an antichain if whenever $\eta = (\eta_1, \ldots, \eta_n) \in \Xi$ and $\tau \neq \eta$ satisfies $\tau_i \leq \eta_i$ for all $i$, then $\tau \notin \Xi$.
\end{defin}
A simple example of an antichain is the set $\{\eta : \sum_i \eta_i = k\}$. The asymptotic bound $n^{-1/2}$ on the probability of this set under the uniform measure extends to all antichains. Indeed, by Sperner's Theorem \cite{sperner}, an antichain cannot contain more than $\binom{n}{\lfloor n/2\rfloor}$ many elements. Therefore, under the uniform measure, 
\begin{equation}\label{eq: antichain_bound}
\mathbb{P}(\Xi) \leq \frac{8}{\sqrt{n}} \text{ for any antichain } \Xi \subset \{0,1\}^n.
\end{equation}

To apply the antichain bound, we must establish the following.
\begin{lem}\label{lem:showanti}
There exist $\epsilon>0$ and $K$ sufficiently large such that for all large $n$, all $r \in I_n'$, and any choice of fixed $(X_i,Z_i)_{i \geq 1}$ and $(\eta_i)_{i \notin \mathcal{I}_n}$, the set
\[
Q = Q(n, (X_i, Z_i)_{i \geq 1}, r, (\eta_i)_{i \notin \mathcal{I}_n}) = \left\{ (\eta_i)_{i \in \mathcal{I}_n} : \mathbb{E}[T_n \mid \vec{N}]((\eta_i)) \in [r,r+\epsilon],~\vec{N} \in \mathcal{N}_n \right\}
\]
is an antichain in $\{0,1\}^{\#\mathcal{I}_n}$.
\end{lem}

Given $x, y$ and a realization $(t_e)$ of the edge weights such that a geodesic from $x$ to $y$ exists, we write $\geo(x,y) = \geo[(t_e)](x,y)$ for the union of all edges appearing in geodesics from $x$ to $y$.  For use in the proof of Lemma \ref{lem:showanti}, we note the following fact, whose proof we omit.
\begin{prop}\label{prop:linear}
	Suppose $(t_e)$ and $(t_e')$ are two edge-weight configurations such that the following hold: i) $t_f' = t_f - \varepsilon$, ii) $t'_e \leq t_e$ for $e \neq f$, iii) a geodesic from $x$ to $y$ exists in the configuration $(t_e)$, and iv) $f \in \geo[(t_e)](x,y)$. Then $T(x,y)[(t_e')] \leq T(x,y)[(t_e)] - \varepsilon$.
\end{prop}

\begin{proof}[Proof of Lemma \ref{lem:showanti}]
	Fix values of the arguments of $Q$ as above. Let $\eta^{(1)} = (\eta_i^{(1)})$ and $\eta^{(2)} = (\eta_i^{(2)})$
	 be realizations of the $\eta$ variables having 
	 $(\eta_i^{(1)})_{i \notin \mathcal{I}_n} = (\eta_i^{(2)})_{i \notin \mathcal{I}_n} =(\eta_i)_{i \notin \mathcal{I}_n}$,
	 such that $\mathbb{E}[T_n \mid \vec{N}]( \eta^{(1)}) \in [r, r+ \epsilon]$ and such that $\vec{N}(\eta^{(1)}) \in \mathcal{N}_n$. In other words, $(\eta_i^{(1)})_{i \in \mathcal{I}_n} \in Q$. Suppose $\eta^{(2)} \neq \eta^{(1)}$ but that $\eta^{(2)} \leq \eta^{(1)}.$ We will show $(\eta_i^{(2)})_{i \in \mathcal{I}_n} \notin Q$ by showing $\pe[T_n \mid \vec{N}](\eta^{(2)}) < r$.
	 
	 In fact, it suffices to show the preceding statement for $\eta^{(2)}$ that differs from $\eta^{(1)}$ in only one index $j \in \mathcal{I}_n$, where $\eta^{(2)}_j = 0$ and $\eta^{(1)}_j = 1$. Indeed, recall that $(t_e)$ is monotone in $\vec{N}$, and $\vec{N}$ is monotone in $\eta$. This means that, if we prove $\pe[T_n \mid \vec{N}](\widetilde \eta^{(2)}) < r$ for some arbitrarily chosen $\widetilde \eta^{(2)} \geq \eta^{(2)}$ differing from $\eta^{(1)}$ in only one index as above, then we necessarily have $\pe[T_n \mid \vec{N}](\eta^{(2)}) < r$ as well. Thus, we will spend the rest of the proof analyzing this simpler case for a fixed arbitrary $j \in \mathcal{I}_n$.
	 
	 We write the difference of the conditional expectations in the two configurations as
	 \begin{align}\label{eq:intout}
	 	\mathbb{E}[T_n \mid \vec{N}](\eta^{(1)}) - \mathbb{E}[T_n \mid \vec{N}](\eta^{(2)}) = \int\left( T_n(\vec N^{(1)}, \Pi, P) - T_n(\vec N^{(2)}, \Pi, P)\right) \, \mathrm{d}(\Pi, P).
	 \end{align}
	 Here $\vec{N}^{(\ell)} = \vec{N}^{(\ell)}((\eta^{(\ell)}_i)_{i \in \mathcal{I}})$ is $\vec{N}$ in the configuration $((X_i, Z_i)_{i \geq 1}, (\eta_i^{(\ell)})_{i \geq 1})$ but viewed as a function only of the $\eta$ coordinates in $\mathcal{I}_n$. Our next step is defining a special set $\Upsilon$ of pairs $(\Pi,P)$ which will be useful for lower-bounding this integral.
	 
	 Write $\geo^{(1)}(0,x_n)$ for the union of geodesics from $0$ to $x_n$ in $(\vec{N}(\eta^{(1)}), \Pi, P)$.
	 We define 
	 \begin{align*}
	 \Upsilon &= \Upsilon(n, (X_i, Z_i)_{i \geq 1}, (\eta_i)_{i \notin \mathcal{I}_n}, (\eta^{(1)}_i)_{i \in \mathcal{I}_n},j) \\
	 &= \left\{ (\Pi,P) : \text{ for some }f \in A(j) \cap \geo^{(1)}(0, x_n), \text{ we have }\pi_j(f) \in (N^{(2)}_j, N^{(1)}_j]\right\}.
	 \end{align*}
	 Note that the interval $(N^{(2)}_j, N^{(1)}_j]$ has length at least $2\sigma_j$ by the very definition of $\mathcal{I}_n$. The definition of $\Upsilon$ ensures that the value of $t_f$ changes from hi-mode to lo-mode when $\eta^{(1)}$ is replaced by $\eta^{(2)}$ (i.e., when $\eta^{(1)}_j$ is ``flipped'').
	 
	 
	 We will need to be able to make a measurable choice of edge $f$ as in the definition of $\Upsilon$ when lower-bounding the integral in \eqref{eq:intout}. On the event $\Upsilon$, we define $e(0)$ to be the edge $f \in A(j) \cap \geo^{(1)}(0, x_n)$, which satisfies $\pi_j(f) \in (N^{(2)}_j, N^{(1)}_j]$ and which is smallest in some deterministic ordering. We will not use $e(0)$ on the event $\Upsilon^c$, so we can choose its value on that event arbitrarily. We note here the most important property of the definition of $e(0)$:
	 \begin{equation}
	 \label{eq:teprop}
	  \text{Conditonal on } \Upsilon \cap \{e(0) = f\}\, ,\  t_{f}^{(L)} \text{ is distributed as } F \text{ conditional on } t_f \leq d_0\ .
	 \end{equation}
	 
	The distributional claim \eqref{eq:teprop} follows from the fact that $\geo^{(1)}(0, x_n)$ depends only on $\vec{N}^{(1)}$, on $\Pi$, and (for fixed $\vec{N}^{(1)}$ and $\Pi$) on the values of those entries of $P$ which actually appear in the edge-weight configuration $(t_e)$ for the outcome $(\vec N^{(1)}, \Pi, P)$. In other words, on the following entries of $P$:
	\begin{align*}\bigg\{t_e^{(L)}:& \pi_k(e) > N^{(1)}_k \text{ for the } k \text{ such that } e \in A(k)\bigg\} \\ &\bigcup\left\{t_e^{(H)}:\,\pi_k(e)\leq N^{(1)}_k \text{ for the } k \text{ such that } e \in A(k) \right\}. \end{align*}
	The independence of the $(L)$- and $(H)$-coordinates of $P$ thus shows \eqref{eq:teprop}.
	
	We now are prepared to prove a probability lower bound for $\Upsilon$.
	\begin{clam}\label{clam:upslb} There exists $\delta_0 > 0$ such that, uniformly in $n,$ $(X_i, Z_i)_{i \geq 1}$, $r \in I'_n$, $(\eta_i)_{i \notin \mathcal{I}_n}$, and $(\eta^{(1)}_i)_{i \in \mathcal{I}_n} \in Q$, we have $\prob((\Pi,P) \in \Upsilon) \geq \delta_0$.
		\end{clam}
		\begin{proof}
			Fixing the arguments of $\Upsilon$ as above, we consider possible values of $\pi_j$. Define the set $U_2 = \{e \in A(j):\, \pi_j(e) \in [0, N_j^{(2)}]\}$, the set $U_1 = \{e \in A(j):\, \pi_j(e) \in (N_j^{(2)}, N_j^{(1)}]\},$ and $U_3 = A(j) \setminus (U_1 \cup U_2).$ We control $\prob((\Pi, P) \in \Upsilon^c)$ by arguing that a large number of $e \in A(j)$ must lie in $U_1 \cup U_2$, and then that, in fact, many of these $e$ must lie in $U_1$.
			
			Indeed, writing $\geo^{(1)} = \geo^{(1)}(0, x_n)$, we see
			\begin{align}\prob((\Pi,P) \in \Upsilon^c) &\leq \prob(\#(U_1 \cup U_2) \cap \geo^{(1)} < K^{j-1}) \nonumber \\
			&+   \prob(U_1 \cap \geo^{(1)} = \varnothing, \#(U_1 \cup U_2) \cap \geo^{(1)} \geq K^{j-1}).\label{eq:upsilonbd}
			\end{align}
			Note that both the events above only depend on $\Pi$ and $P$ (since $\vec{N} = \vec{N}^{(1)}$ is fixed). Also the event appearing in the first probability is equal to the event $\{\geo^{(1)} \cap A(j) \text{ contains } < K^{j-1} \text{ hi-mode edges}\}$. Since ``$\prob$'' in that term really means averaging over $(\Pi, P)$ for fixed $\vec{N} = \vec{N}^{(1)}$, this is a conditional probability (given $\vec{N} = \vec{N}^{(1)}$). By the second item of Definition~\ref{def: good} of the set $\mathcal{N}_n$ and the fact that $\vec{N}^{(1)} \in \mathcal{N}_n$, this probability is bounded above by $\xi_n$. 
			
			We turn to bounding the second term on the right-hand side of \eqref{eq:upsilonbd}. We bound this probability by conditioning on the value of $U_1 \cup U_2$ and $P$ as well as the permutations in $A(k)$ for $k \neq j$. In other words, we write
			\begin{align*}
			\prob(U_1 &\cap \geo^{(1)} = \varnothing, \#(U_1 \cup U_2) \cap \geo^{(1)} \geq K^{j-1})\\ = &\pe \left[ \prob\left(U_1 \cap \geo^{(1)}
			 = \varnothing, \#(U_1 \cup U_2) \cap \geo^{(1)} \geq K^{j-1} \bigm| U_1 \cup U_2 , P, (\pi_k)_{k \neq j} \right)  \right]\\
			 = &\pe \left[ \prob\left(U_1 \cap \geo^{(1)}
			 = \varnothing \bigm| U_1 \cup U_2 , P, (\pi_k)_{k \neq j} \right);\, \#(U_1 \cup U_2) \cap \geo^{(1)} \geq K^{j-1}  \right] .
			\end{align*}
			We have used the fact that (given our other variables, which determine $\vec{N}^{(1)}$ and $\vec{N}^{(2)}$) $\geo^{(1)}$ depends only on $P$ and on $U_1 \cup U_2$, but not directly on $U_1$ or $U_2$. This fact also ensures that $\geo^{(1)}$ is treated as constant when computing the above conditional probability. Moreover, the 
			conditional distribution of $U_1$ is just the distribution of the first $N_j^{(1)} - N_j^{(2)}$ elements of a uniform permutation of $U_1 \cup U_2$. Thus, the conditional probability above is the same as the probability that, under a uniform permutation $\sigma$ of $\{1, \ldots, N_j^{(1)}\}$, no element $k \in \{1, \ldots, \#((U_1 \cup U_2) \cap \geo^{(1)})\}$ has $\sigma(k) \in \{1, \ldots, N_j^{(1)} - N_j^{(2)}\}$.
			
			We can now invoke the bounds on $\#((U_1 \cup U_2) \cap \geo^{(1)})$ and $N_j^{(1)} - N_j^{(2)}$. By the very definition of $\mathcal{I}_n$, we have $N_j^{(1)} - N_j^{(2)} \geq 2\sigma_j \geq c_1 K^j$ for some $c_1 > 0$. Moreover, for each outcome considered in the expectation above, we have $\#((U_1 \cup U_2) \cap \geo^{(1)}) \geq K^{j-1}$, and we of course have the trivial upper bound $N_j^{(1)} \leq \#A(j) \leq 18K^{2j}$. So the probability involving $\sigma$ described in the preceding paragraph is at most
			\begin{equation}
			\label{eq:sigmaprob}
			 \left( \frac{N_j^{(1)} - \#((U_1 \cup U_2) \cap \geo^{(1)})}{N_j^{(1)}}\right)^{N_j^{(1)}- N_j^{(2)}} \leq  \left(1 - \frac{c}{ K^j}\right)^{c_1 K^j},
			\end{equation}
			uniformly in $j$. The right-hand side of \eqref{eq:sigmaprob} is less than one for each $j$, and converges as $j \to \infty$ to $e^{-c_1 c} < 1$. Thus, $\prob((\Pi,P) \in \Upsilon^c) < 1 - \delta_0$, for some $\delta_0 > 0$, uniformly in $j$ large.
			\end{proof}
			We will lower-bound the integral in \eqref{eq:intout} by restricting it to $\Upsilon$ and then integrating iteratively, first conditioning on the information needed to determine $\geo^{(1)}$. Claim \ref{clam:upslb} will help bound the integral restricted to $\Upsilon$. 
			
			Define the event
			\begin{equation}
			\label{eq:upsilon1def}
			\Upsilon_1 = \Upsilon_1(\vec{N}^{(1)}) =  \{(\Pi, P): T(0,x_n) \in I_n' \text{ in } (t_e^{(1)})\}, \end{equation}
			and write $\Upsilon_2 = \Upsilon \cap \Upsilon_1$. Extending the reasoning used to establish \eqref{eq:teprop}, we see that, conditional on $\Upsilon_2$ and $e(0)=f$, the value of $t_{f}^{(L)}$ has distribution $F$ conditional on $t_f \leq d_0$. We can then lower-bound \eqref{eq:intout} by
			\begin{align}\nonumber\int_{\Upsilon_2} \bigg[T_n&(\vec N^{(1)}, \Pi, P) - T_n(\vec N^{(2)}, \Pi, P)\bigg]  \, \mathrm{d}(\Pi,P)
			\\
			&=\label{eq:braket}\sum_{f}\int_{\Upsilon_2} \left[T_n(\vec N^{(1)}, \Pi, P) - T_n(\vec N^{(2)}, \Pi, P)\right]  \, \mathbf{1}_{\{e(0) = f\}} \ \mathrm{d}(\Pi,P),
			\end{align}
			where we have used monotonicity of the passage time in $\vec{N}$ (which ensures the integrand is a.s.~nonnegative). 
			
			We write $\widehat P_f$ for the collection of $t_e^{(L)}$ and $t_e^{(H)}$ for $e \neq f$, along with $t_f^{(H)}$. In other words, $\widehat P_f = (t_f^{(H)}) \oplus (t_e^{(L)}, t_e^{(H)})_{f \neq e \in \edge^2}$. Using Fubini's theorem, we can rewrite the integral in \eqref{eq:braket} as
			\begin{equation}\label{eq: liable}
			\sum_{f}\int_{\Upsilon_2} \left[ \int \left[T_n(\vec N^{(1)}, \Pi, P) - T_n(\vec N^{(2)}, \Pi, P)\right]  \,  \ \mathrm{d}t_f^{(L)}\ \right] \mathbf{1}_{\{e(0) = f\}} \mathrm{d}(\Pi, \widehat P_f), 
			\end{equation}
			using that the event $\{e(0) = f\} \cap \Upsilon_2$ is measurable with respect to~$(\Pi, \widehat P_f)$.
			
			\begin{clam}\label{clam:innersanctum}
				There exists a $c > 0$ such that uniformly in $n$ large and $f$, in given values of the parameters in the argument of $\Upsilon$, and in $\Pi$ and $\widehat P_f$ such that $\Upsilon_2 \cap \{e(0) = f\}$ occurs, the following holds:
				\begin{equation}\label{eq:pint}
				\int\left( T_n(\vec N^{(1)}, \Pi, P) - T_n(\vec N^{(2)}, \Pi, P)\right) \, \mathrm{d}t_f^{(L)} \geq c. \end{equation}
				\end{clam}
				\begin{proof}
					For $\vec{N}^{(1)},$ $\Pi$, and $\widehat P_f$ as in the claim, we note that 
					\[
					T_n(\vec{N}^{(1)}, \Pi, P) = T(0, x_n)[(\vec{N}^{(1)}, \Pi, P)] \text{ for a.e. }t_f^{(L)},
					\] 
					since $\pi_j(f) \in (N_j^{(2)}, N_j^{(1)}]$ and since $I_n' \subseteq I_n$. We lower-bound $T(0, x_n)[(\vec N^{(1)}, \Pi, P)] - T_n(\vec N^{(2)}, \Pi, P)$ by first lower-bounding $T(0, x_n)[(\vec N^{(1)}, \Pi, P)] - T(0, x_n)[(\vec N^{(2)}, \Pi, P)].$
					
					By Proposition \ref{prop:linear}, using the fact $\pi_j(f) \in (N_j^{(2)}, N_j^{(1)}]$, we have the following on $\{e(0) = f\} \cap \Upsilon_2$:
					\begin{align*}T(0,x_n)[(\vec N^{(1)}, \Pi, P)] - T(0,x_n)[(\vec N^{(2)}, \Pi, P)] &\geq  t_f^{(H)} - t_f^{(L)}\\
					&\geq d_0 - t_{f}^{(L)}.
					\end{align*} 
					We now return to the truncated variable $T_n$. Since $T(0,x_n)[(\vec N^{(1)}, \Pi, P)]\in I'_n$ and since $T_n$ is truncated below at $M_n - \sqrt{\log \|x_n\|}/2$, the above shows that
					\[T_n[(\vec N^{(1)}, \Pi, P)] - T_n(\vec N^{(2)}, \Pi, P) \geq  d_0 - t_{f}^{(L)},\]
					for all $n$ large enough that $\sqrt{\log \|x_n\|}/4 > d_0$.
					Integrating this inequality gives by \eqref{eq: implication_d_0}
					\[
					\text{LHS of } \eqref{eq:pint} \geq d_0 - \int t_f^{(L)}~\text{d}t_f^{(L)} = d_0 - \frac{\mathbb{E}t_e \mathbf{1}_{\{t_e \leq d_0\}}}{\mathbb{P}(t_e \leq d_0)} \geq c,
					\]
					for some $c>0$. This completes the proof.
				\end{proof}
				Having proved the two claims above, the result follows in short order. Recall our goal is to show $(\eta_i^{(2)})_{i \in \mathcal{I}_n} \notin Q$ by showing $\pe[T_n \mid \vec{N}](\eta^{(2)}) < r$, if $\varepsilon$ is small enough. By the result of Claim \ref{clam:innersanctum} placed into \eqref{eq: liable}, we have
				\[\pe[T_n \mid \vec{N}](\eta^{(2)}) \leq \pe[T_n \mid \vec{N}](\eta^{(1)}) - c \prob((\Pi,P) \in \Upsilon_2). \]
				All that remains is to show that $\prob(\Upsilon_2) > c_1 > 0$ uniformly.
				
				We have $\prob((\Pi, P) \in \Upsilon_2^c) \leq \prob((\Pi, P) \in \Upsilon^c) + \prob((\Pi, P) \in \Upsilon_1^c)$, and the first term is at most $1 - \delta_0 < 1$ by Claim \ref{clam:upslb}. The second is at most $\xi_n \to 0$ by the definition of $\mathcal{N}_n$ (Definition \ref{def: good}). So $\mathbb{P}((\Pi, P) \in \Upsilon_2) > \delta_0/2$ for $n$ large, and this completes the proof of Lemma~\ref{lem:showanti}.
\end{proof}

Returning to \eqref{eq: to_pick_up} (in an effort to complete the proof of Proposition~\ref{prop: small_ball}), and applying the antichain bound \eqref{eq: antichain_bound}, we obtain
\begin{equation}\label{eq: near_the_end}
\mathbb{P}\left( \mathbb{E}[T_n \mid \vec{N}] \in [r,r+\epsilon],~\vec{N} \in \mathcal{N}_n\right) \leq 8 \mathbb{E}\left[ \frac{1}{\sqrt{\#\mathcal{I}_n}} \wedge 1\right].
\end{equation}
To estimate this quantity, we write
\[
\#\mathcal{I}_n = \sum_{j= \lceil 0.25 \log_K \|x_n\|_\infty \rceil}^{\lfloor 0.75 \log_K\|x_n\|_\infty \rfloor} \mathbf{1}_{\{X_j \leq \mu_j - \sigma_j\}} \mathbf{1}_{\{Z_j \geq \mu_j + \sigma_j\}},
\]
where the collection of all summands is independent. They are Bernoulli random variables, so to estimate their parameters, we compute
\[
\mathbb{E}\mathbf{1}_{\{X_j \leq \mu_j - \sigma_j\}} = \frac{\mathbb{P}(X_j \leq \mu_j - \sigma_j , U_j \leq 1/2)}{\mathbb{P}(U_j \leq 1/2)} = 2 \mathbb{P}(F_j^{-1}(U_j) \leq \mu_j - \sigma_j, U_j \leq 1/2).
\]
Since $F_j^{-1}(U_j)$ is a Binomial random variable with parameters $\#A(j)$ and $1-F(d_0)$, the central limit theorem implies
\[
\lim_{n \to \infty} \sup_{0.25 \log_K \|x_n\| \leq j \leq 0.75 \log_K\|x_n\|} \left| \mathbb{P}(F_j^{-1}(U_j) \leq \mu_j-\sigma_j) - \frac{1}{\sqrt{2\pi}} \int_{-\infty}^{-1} e^{-x^2/2}~\text{d}x\right| = 0.
\]
Since this last integral is $<1/2$, when $n$ is large, every $j$ in the specified range has the property $\{F_j^{-1}(U_j) \leq \mu_j - \sigma_j\} \subset \{U_j \leq 1/2\}$. Using this in the above equation and since the integral is also $>0.1$ gives for large $n$,
\[
\mathbb{E} \mathbf{1}_{\{X_j \leq \mu_j - \sigma_j\}} = 2 \mathbb{P}(F_j^{-1}(U_j) \leq \mu_j - \sigma_j) \geq 0.2.
\]
A similar argument works to show that if $n$ is large,
\begin{equation*}\label{eq: bernoulli_estimate}
\mathbb{E}\mathbf{1}_{\{Z_j \geq \mu_j + \sigma_j\}} \geq 0.2.
\end{equation*}
Therefore if $(b_j)$ is a collection of i.i.d.~Bernoulli random variables with parameter $0.04$, if $n$ is large,
\begin{align}
\mathbb{P}(\#\mathcal{I}_n \leq 0.01  \log_K \|x_n\|_\infty) &\leq  \mathbb{P}\left( \sum_{j= \lceil 0.25 \log_K \|x_n\|_\infty \rceil}^{\lfloor 0.75 \log_K\|x_n\|_\infty \rfloor} b_j \leq 0.01  \log_K \|x_n\|_\infty\right) \nonumber \\
&\leq \exp\left( -c_1 \log_K \|x_n\|_\infty\right) \label{eq: houdres}
\end{align}
for some $c_1>0$. (Here we have used standard large deviation inequalities for sums of i.i.d.~Bernoullis.) Putting \eqref{eq: houdres} in \eqref{eq: near_the_end} gives for all $r \in I_n'$ and $n$ large
\begin{align*}
\mathbb{P}\left( \mathbb{E}[T_n \mid \vec{N}] \in [r,r+\epsilon],~\vec{N} \in \mathcal{N}_n\right) &\leq 8 \exp\left( - c_1 \log_K\|x_n\|_\infty \right) + \frac{8}{\sqrt{0.01 \log_K \|x_n\|_\infty}} \\
&\leq \frac{C_1}{\sqrt{\log \|x_n\|}}.
\end{align*}
This completes the proof.
\end{proof}

%

\subsection{Step 3. Reckoning.}\label{sec: step_3}

From the small ball result and the truncation, we can give a fluctuation result for $T(0,x_n)$. This will contradict Assumption~\ref{ass: main_assumption}.
\begin{prop}\label{prop: contradiction}
There exists $c_1>0$ such that for all large $n$,
\[
\mathbb{P}(T(0,x_n) \leq M_n - c_1\sqrt{\log \|x_n\|}) > c_1,
\]
and
\[
\mathbb{P}(T(0,x_n) \geq M_n + c_1\sqrt{\log \|x_n\|}) > c_1.
\]
\end{prop}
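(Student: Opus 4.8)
The plan is to derive the proposition from three ingredients already in hand: the small-ball estimate of Proposition~\ref{prop: small_ball}, the fact (Lemma~\ref{lem: truncation}) that $M_n$ is a median of $Y_n := \mathbb{E}[T_n \mid \vec{N}]$, and the a.s.\ truncation bounds $A_n \le T_n \le B_n$ with $B_n - A_n = \sqrt{\log\|x_n\|}$. The two displayed inequalities are symmetric, so I would prove the first and note the change for the second at the end. Fix a constant $c_1 \in (0, \min(1/4, \epsilon/(8C_1)))$, with $\epsilon$ and $C_1$ from Proposition~\ref{prop: small_ball}.

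First I would establish an anti-concentration bound for $Y_n$ just below its median. Since $c_1 < 1/4$, the interval $(M_n - c_1\sqrt{\log\|x_n\|}, M_n]$ lies inside $I_n'$, so it can be covered by at most $c_1\sqrt{\log\|x_n\|}/\epsilon + 1$ intervals of length $\epsilon$ whose left endpoints are in $I_n'$. Applying Proposition~\ref{prop: small_ball} to each interval and summing, then adding $\mathbb{P}(\vec{N}\notin\mathcal{N}_n) \le \xi_n$ from Proposition~\ref{prop: good}, I would obtain
\[
\mathbb{P}(M_n - c_1\sqrt{\log\|x_n\|} < Y_n \le M_n) \le \frac{c_1 C_1}{\epsilon} + \frac{C_1}{\sqrt{\log\|x_n\|}} + \xi_n \le \frac{1}{8} + o(1).
\]
Since $\mathbb{P}(Y_n \le M_n) \ge 1/2$ by the median property, subtracting gives, for all large $n$,
\[
\mathbb{P}(Y_n \le M_n - c_1\sqrt{\log\|x_n\|}) \ge \frac{1}{2} - \frac{1}{8} - o(1) \ge \frac{1}{4}.
\]

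Next I would transfer this to $T_n$ and then to $T(0,x_n)$. Let $G = \{Y_n \le M_n - c_1\sqrt{\log\|x_n\|}\}$, which is $\vec{N}$-measurable with $\mathbb{P}(G) \ge 1/4$. On $G$, using $A_n = M_n - \tfrac12\sqrt{\log\|x_n\|}$ together with $T_n - A_n \ge 0$, Markov's inequality yields
\[
\mathbb{P}(T_n - A_n > (\tfrac12 - \tfrac{c_1}{2})\sqrt{\log\|x_n\|} \mid \vec{N}) \le \frac{\tfrac12 - c_1}{\tfrac12 - \tfrac{c_1}{2}} = \frac{1-2c_1}{1-c_1},
\]
so $\mathbb{P}(T_n \le M_n - \tfrac{c_1}{2}\sqrt{\log\|x_n\|} \mid \vec{N}) \ge \tfrac{c_1}{1-c_1} \ge c_1$ on $G$. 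Integrating over $G$, and using that $T_n = T(0,x_n)$ on $\{T(0,x_n)\in I_n'\}$ since $I_n' \subset (A_n,B_n)$, together with $\mathbb{P}(T(0,x_n)\notin I_n')\to 0$ from Lemma~\ref{lem: good_lemma_1}, I would conclude
\[
\mathbb{P}(T(0,x_n) \le M_n - \tfrac{c_1}{2}\sqrt{\log\|x_n\|}) \ge c_1\,\mathbb{P}(G) - o(1) \ge \frac{c_1}{8},
\]
which is the first inequality with constant $c_1/16$ (replacing $c_1/2$ by the smaller $c_1/16$ only enlarges the event, and $c_1/8 > c_1/16$). For the second inequality I would run the mirror-image argument: $\mathbb{P}(Y_n \ge M_n)\ge 1/2$ gives $\mathbb{P}(Y_n \ge M_n + c_1\sqrt{\log\|x_n\|})\ge 1/4$ for large $n$, and applying Markov's inequality to $B_n - T_n \ge 0$ (with $B_n = M_n + \tfrac12\sqrt{\log\|x_n\|}$) propagates the bound to $T_n$ and then to $T(0,x_n)$. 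Taking the constant in the statement to be $c_1/16$ then completes the proof.

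The step I expect to carry the weight is the anti-concentration bound: one must choose $c_1$ small enough relative to $\epsilon/C_1$ that summing the small-ball estimate over an $\epsilon$-net of an interval of length proportional to $\sqrt{\log\|x_n\|}$ still leaves a fixed positive gap below the median. This is precisely the point at which the $\sqrt{\log\|x_n\|}$ scale of the fluctuation is produced; everything after it is a routine two-line Markov argument followed by de-truncation via Lemma~\ref{lem: good_lemma_1}.
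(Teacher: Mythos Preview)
Your proof is correct and follows essentially the same approach as the paper: both use Proposition~\ref{prop: small_ball} summed over an $\epsilon$-net together with the median property to get anti-concentration for $\mathbb{E}[T_n\mid\vec N]$, then transfer to $T_n$ via the truncation bound (you phrase the transfer as Markov's inequality on $T_n-A_n$; the paper does the equivalent computation by splitting $\int(T_n-M_n)\,\mathrm{d}(\Pi,P)$ over $\{T_n\ge M_n+(c_2/2)\sqrt{\log\|x_n\|}\}$ and its complement). One minor simplification you can make: since $M_n\pm\tfrac{c_1}{2}\sqrt{\log\|x_n\|}\in(A_n,B_n)$, the events $\{T_n\le M_n-\tfrac{c_1}{2}\sqrt{\log\|x_n\|}\}$ and $\{T(0,x_n)\le M_n-\tfrac{c_1}{2}\sqrt{\log\|x_n\|}\}$ are literally equal, so the appeal to Lemma~\ref{lem: good_lemma_1} and the $o(1)$ correction are unnecessary.
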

\begin{proof}
We first use the result of step 2 to show that there exists $c_2\in (0,1/4)$ such that for all large $n$,
\[
\mathbb{P}\bigg( \mathbb{E}[T_n \mid \vec{N}] \leq M_n - c_2 \sqrt{\log \|x_n\|}\bigg) > c_2,
\]
and
\begin{equation}\label{eq: first_reckoning}
\mathbb{P}\bigg( \mathbb{E}[T_n \mid \vec{N}] \geq M_n + c_2 \sqrt{\log \|x_n\|}\bigg) > c_2.
\end{equation}
We will show the second inequality; the first is similar. To do so, we cover the interval $\left[ M_n, M_n + c_2\sqrt{\log \|x_n\|}\right]$ by a collection $\mathfrak{I}$ of $\left\lceil c_2 \sqrt{\log \|x_n\|} / \epsilon\right\rceil$ many closed intervals of length $\epsilon$, all of which are contained in $I_n'$. (Here, $\epsilon$ is from Proposition~\ref{prop: small_ball}.) Then we upper bound
\begin{align*}
\mathbb{P}\left( \mathbb{E}[T_n \mid \vec{N}] \in \left[ M_n, M_n+c_2\sqrt{\log \|x_n\|} \right]\right) \leq \sum_{\mathcal{I} \in \mathfrak{I}} &\mathbb{P}(\mathbb{E}[T_n \mid \vec{N}] \in \mathcal{I}, ~\vec{N} \in \mathcal{N}_n) \\
+ &\mathbb{P}(\vec{N} \notin \mathcal{N}_n).
\end{align*}
By Proposition~\ref{prop: good} and Proposition~\ref{prop: small_ball}, the above is bounded by
\[
\frac{C_1}{\sqrt{\log \|x_n\|}} \left\lceil \frac{c_2}{\epsilon} \sqrt{\log \|x_n\|}\right\rceil +\xi_n.
\]
For $n$ large, $\xi_n < 1/8$. Also, if we choose $c_2 < \frac{\epsilon}{16C_1}$, then the first term is $<1/8$. Therefore for such choices, we have
\[
\mathbb{P}\left( \mathbb{E}[T_n \mid \vec{N}] \in \left[ M_n, M_n+c_2\sqrt{\log \|x_n\|}\right]\right) < \frac{1}{4}.
\]
Since $M_n$ is a median of $\mathbb{E}[T_n \mid \vec{N}]$, the left side of \eqref{eq: first_reckoning} is at least
\[
\frac{1}{2} - \mathbb{P}\left( \mathbb{E}[T_n \mid \vec{N}] \in \left[ M_n, M_n+c_2\sqrt{\log \|x_n\|} \right]\right) > \frac{1}{4}.
\]
This proves \eqref{eq: first_reckoning}.

It remains to show that the proposition follows from \eqref{eq: first_reckoning} and the fact that $T_n$ is defined using a truncation. Again, because the statements have similar proofs, we will only show the second. Now define
\[
\mathsf{E} = \{\vec{N} : \mathbb{E}[T_n \mid \vec{N}] \geq M_n + c_2\sqrt{\log \|x_n\|}\}
\]
and for a given $\vec{N}$,
\[
\mathsf{A} = \mathsf{A}(\vec{N})=  \left\{ (\Pi, P) : T_n(\vec{N},\Pi,P) \geq M_n + (c_2/2)\sqrt{\log \|x_n\|}\right\}.
\]
Then we compute
\begin{align*}
\mathbb{E}[T_n \mid \vec{N}] - M_n = \int (T_n - M_n) ~\text{d}(\Pi,P) &= \int(T_n-M_n) \mathbf{1}_{\mathsf{A}}(\Pi,P)~\text{d}(\Pi,P) \\
&+ \int(T_n-M_n) \mathbf{1}_{\mathsf{A}^c}(\Pi,P)~\text{d}(\Pi,P)  \\
&\leq \frac{\sqrt{\log \|x_n\|}}{2} \mathbb{P}((\Pi,P) \in \mathsf{A}) \\
&+ \frac{c_2}{2}\sqrt{\log \|x_n\|} \mathbb{P}((\Pi,P)\in \mathsf{A}^c).
\end{align*}
If $\vec{N} \in \mathsf{E}$, we obtain
\[
c_2 \leq \frac{1}{2} \mathbb{P}((\Pi,P) \in \mathsf{A}) + \frac{c_2}{2} \mathbb{P}((\Pi,P) \in \mathsf{A}^c),
\]
or
\begin{equation}\label{eq: near_end}
\mathbb{P}((\Pi,P) \in \mathsf{A}) \geq \frac{c_2}{1-c_2}.
\end{equation}
This implies
\begin{align*}
\mathbb{P}\left( T(0,x_n) \geq M_n + (c_2/2) \sqrt{\log \|x_n\|}\right) &= \mathbb{P}\left( T_n \geq M_n + (c_2/2) \sqrt{\log \|x_n\|}\right) \\
&= \int \mathbb{P}((\Pi,P) \in \mathsf{A})~\text{d}\vec{N} \\
&\geq \int_{\mathsf{E}} \mathbb{P}((\Pi,P) \in \mathsf{A})~\text{d}\vec{N}.
\end{align*}
By \eqref{eq: first_reckoning} and \eqref{eq: near_end}, this is bounded below by $c_2^2/(1-c_2)$. Taking $c_1$ small, therefore, completes the proof of the proposition.
\end{proof}

\section{Sketch of proof of Theorem~\ref{thm: cylinders}}\label{sec: cylinders}

The proof of Theorem~\ref{thm: cylinders} follows the same lines as those of the proof of Theorem~\ref{thm: main_thm}, so we only indicate here the changes needed. The main difference is that we do not need to consider changing weights from hi- to lo-mode in all annuli $A(k)$ as we did before. It is sufficient to focus on only one of the top-scale annuli, the annulus $A(k_0)$, with $k_0 = \lfloor \log_K \|x_n\|_\infty \rfloor$. 

Roughly speaking, the reason why one can obtain a polynomial lower bound for fluctuations of the passage time in a cylinder (whereas the bound in the full space is only logarithmic) is as follows. In the full space, a geodesic from $0$ to $x_n$ contains at least order $K^j$ many edges in the annulus $A(j)$. If we resample $N_j$ (number of hi-mode weights associated to this annulus), it has a positive probability, independent of $j$, to decrease by order $K^j$ as well, since this is the standard deviation of $N_j$. The edges where weights decrease are uniformly distributed in the annulus, so any given edge has probability $\sim K^j/K^{2j} = 1/K^j$ to be one. This means the expected number of these decreased edges which lie on the geodesic is at least order $K^j \cdot 1/K^j = 1$. In other words, when we decrease $N_j$, typically there will be one edge on the geodesic whose weight decreases, and so $T(0,x_n)$ will decrease by a constant. In a cylinder, the counting is more favorable. Still the geodesic takes at least order $K^j$ many edges in the annulus, but now the standard deviation of the number of hi-mode edges in $A(j)$ which are also in the cylinder is $K^{j\frac{1+\alpha}{2}}$. This means that the expected number of decreased edges on the geodesic is at least order $K^j \cdot K^{j\frac{1+\alpha}{2}}/K^{j(1+\alpha)} = K^{j\frac{1-\alpha}{2}}$, and so when we decrease $N_j$, typically $T(0,x_n)$ will decrease by this much. If we consider only the top-level annulus (so that $K^j \sim \|x_n\|$), we obtain fluctuations at least of order $\|x_n\|^{\frac{1-\alpha}{2}}$.

This reasoning also shows why our strategy for Theorem~\ref{thm: main_thm} breaks down in dimensions $d > 2$. In the full space, the expected number of edges of decrease on the geodesic in $A(j)$ is of order $K^j/K^{dj/2}$ and this is summable in $j$. So, in total, we would typically only have finitely many edges which decrease in weight on the geodesic, and this would lead to a constant lower bound for fluctuations.

Moving to the proof, we begin as before, assuming for a contradiction:
\begin{ass}\label{ass: assumption_2}
We assume that there exist
\begin{enumerate}
\item a sequence $(w_n)$ of reals decreasing to $0$,
\item a sequence $(a_n)$ of reals, and
\item a sequence of nonzero points $(x_n)$ in $\mathbb{Z}^d$ with $\|x_n\| \to \infty$ such that if $J_n$ is defined as
\[
J_n = \left[ a_n, a_n + w_n \|x_n\|^\varpi\right]
\]
(with $\varpi = \frac{1-\alpha}{2}$), then
\[
\mathbb{P}(T(0,x_n;\alpha) \in J_n) \to 1 \text{ as } n \to \infty.
\]
\end{enumerate}
\end{ass}
Under this assumption, we define the variables $(\vec{N}, \Pi, P)$ as previously, so that they represent the number of hi-mode edges in the annuli portions $(A(k) \cap C(0,x_n;\alpha))$, a uniform ordering of the edges, and pairs of hi/lo-mode edge-weights. As already noted, only the annulus of the top-most scale will be used, but we define all the variables, to use the previous framework more easily.

\bigskip
\noindent
{\bf Step 1. Truncation and definition of good $\vec{N}$'s.} In the corresponding step 1, we define a similar truncation:
\[
T_n = \begin{cases}
A_n &\quad\text{if } T(0,x_n;\alpha) \leq A_n \\
B_n &\quad\text{if } T(0,x_n;\alpha) \geq B_n \\
T(0,x_n;\alpha) &\quad\text{otherwise},
\end{cases}
\]
except that now $B_n = A_n + \|x_n\|^{\varpi}$. The same proof as before shows that there is a choice of $A_n$ such that some median of $\mathbb{E}[T_n \mid \vec{N}]$ is equal to $M_n = (A_n+B_n)/2$.

For the definition of the ``good'' set $\mathcal{N}_n$ of values of $\vec{N}$, we first define analogous intervals $I_n,I_n'$ as
\begin{align*}
I_n&= \left[ M_n - \frac{\|x_n\|^\varpi}{2}, M_n + \frac{\|x_n\|^\varpi}{2}\right] \\
I_n'&= \left[ M_n - \frac{\|x_n\|^\varpi}{4}, M_n + \frac{\|x_n\|^\varpi}{4} \right].
\end{align*}
Also, since we focus only on the top-most scale annulus, we slightly modify the definition of $\mathcal{N}_n$.
\begin{defin}
For a sequence $(\xi_n)$ of reals converging to $0$, $\mathcal{N}_n$ is the set of those vectors $\vec{N}$ such that the following two conditions hold.
\begin{enumerate}
\item $\mathbb{P}\left( T(0,x_n;\alpha) \in I_n' \mid \vec{N}\right) > 1-\xi_n$.
\item Let $E_n = E_{n,c}$ be the event that every $T(\cdot, \cdot; \alpha)$-geodesic from 0 to $x_n$ has at least $c\|x_n\|$ hi-mode edges in $A(k_0)$, with $k_0 = k_0(n) = \lfloor \log_K \|x_n\|_\infty \rfloor$. Then
\[
\mathbb{P}\left( E_n \mid \vec{N}\right) > 1-\xi_n.
\]
\end{enumerate}
\end{defin}

As above, we have a result on the probability of $\mathcal{N}_n$:
\begin{prop}
There is a $K$ sufficiently large, a $c$ sufficiently small, and a sequence of reals $(\xi_n)$ with $\xi_n \to 0$ such that
\[
\mathbb{P}(\vec{N} \in \mathcal{N}_n) > 1-\xi_n.
\]
\end{prop}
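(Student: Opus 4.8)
The plan is to follow the proof of Proposition~\ref{prop: good} essentially verbatim, reducing the statement to cylinder analogues of Lemmas~\ref{lem: good_lemma_1} and~\ref{lem: good_lemma_2} and then combining them exactly as before. The two facts to prove are: (L1) $\mathbb{P}(T(0,x_n;\alpha)\in I_n')\to1$, and (L2) for $K$ large and $c$ small, with probability tending to $1$ every $T(\cdot,\cdot;\alpha)$-geodesic from $0$ to $x_n$ has at least $c\|x_n\|$ hi-mode edges lying in $A(k_0)$. Granting these, the conclusion follows word for word from the last paragraph of the proof of Proposition~\ref{prop: good}: (L1) and (L2) produce a sequence $\epsilon_n\to0$ with $\mathbb{P}(E_n,\,T(0,x_n;\alpha)\in I_n')>1-\epsilon_n$; setting $\xi_n=\sqrt{\epsilon_n}$ and $U_n(\vec{N})=\mathbb{P}(E_n,\,T(0,x_n;\alpha)\in I_n'\mid\vec{N})$, Markov's inequality applied to $1-U_n$ gives $\mathbb{P}(U_n>1-\xi_n)>1-\xi_n$, and on the event $\{U_n>1-\xi_n\}$ both items in the definition of $\mathcal{N}_n$ hold simultaneously.

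For (L1), the proof of Lemma~\ref{lem: good_lemma_1} carries over unchanged: if $\mathbb{P}(T(0,x_n;\alpha)\in I_n')$ does not converge to $1$, then Assumption~\ref{ass: assumption_2} forces $(T(0,x_n;\alpha)-a_n)/\|x_n\|^{\varpi}\to0$ in probability, and writing $(T_n-A_n)/\|x_n\|^{\varpi}$ as $\big(\big(\tfrac{a_n-A_n}{\|x_n\|^{\varpi}}+\tfrac{T(0,x_n;\alpha)-a_n}{\|x_n\|^{\varpi}}\big)\vee0\big)\wedge1$ and passing to a subsequence contradicts the fact that $1/2$ is a median of $(T_n-A_n)/\|x_n\|^{\varpi}$ (recall $M_n$ is a median of $\mathbb{E}[T_n\mid\vec{N}]$).

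For (L2) I will follow Lemma~\ref{lem: good_lemma_2}, with the cylinder passage time replacing $T$ and with the additional task of confining the hi-mode edges to the single annulus $A(k_0)$. Put $t_e'=t_e+\mathbf{1}\{t_e\text{ is hi-mode}\}$, which dominates $(t_e)$ and satisfies the concave ordering condition, and let $T'(\cdot,\cdot;\alpha)$ be the corresponding cylinder passage time. A cylinder version of Marchand's strict-inequality theorem \cite[Theorem~1.5]{Marchand}, together with a weak shape theorem for the cylinder passage time --- both obtained with no moment assumption in two dimensions via the Cerf--Th\'eret infinite-cluster construction $\tilde T$ exactly as in the proof of Lemma~\ref{lem: good_lemma_2}, and in dimension $d\geq3$ via \cite{Marchand} itself, which is where $\mathbb{E}t_e<\infty$ enters --- will provide a constant $\eta>0$, independent of $n$, such that with probability tending to $1$ one has, simultaneously for every vertex $w\in C(x_n;\alpha)$ with $\|w\|$ of order $\|x_n\|$, that the $C(x_n;\alpha)$-restricted passage times $\widehat T(0,w)$ and $\widehat T'(0,w)$ (for $(t_e)$ and $(t_e')$ respectively) satisfy $\widehat T'(0,w)-\widehat T(0,w)\geq\eta\|w\|$. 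Since $T'(\gamma)=T(\gamma)+\#\{\text{hi-mode edges of }\gamma\}$ for every path $\gamma\subset C(x_n;\alpha)$, comparing any $\widehat T(0,w)$-geodesic to the optimality of $\widehat T'(0,w)$ shows that such a geodesic carries at least $\eta\|w\|$ hi-mode edges.

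To localize, I take a $T(\cdot,\cdot;\alpha)$-geodesic $\Gamma$ from $0$ to $x_n$, let $w_0$ be its last vertex in $[-K^{k_0-1},K^{k_0-1}]^2$ and $w_1$ its first vertex after $w_0$ outside $[-K^{k_0},K^{k_0}]^2$ (taking $w_1=x_n$ in the boundary case $\|x_n\|_\infty=K^{k_0}$). Then all edges of the sub-path $\Gamma_1$ of $\Gamma$ from $w_0$ to $w_1$ lie in $A(k_0)$, and $\|w_1\|\geq\|w_1\|_\infty\geq K^{k_0}$ while $\|x_n\|<\sqrt2\,K^{k_0+1}$, so $\|w_1\|$ is of order $\|x_n\|$. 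The sub-path of $\Gamma$ from $0$ to $w_1$ is a $C(x_n;\alpha)$-restricted geodesic from $0$ to $w_1$, so by the previous paragraph it carries at least $\eta\|w_1\|$ hi-mode edges; on the other hand the sub-path from $0$ to $w_0$ has, by the cylinder analogue of the geodesic-length estimate \cite[Theorem~4.9]{ADH_book} (a local statement, which runs inside the cylinder with the same proof once one knows $T(0,x_n;\alpha)=O(\|x_n\|)$, itself a consequence of the cylinder shape theorem), at most $C_5\|w_0\|\leq C_5\sqrt2\,K^{k_0-1}$ edges, hence at most that many hi-mode edges. Subtracting, $\Gamma_1$ carries at least $\eta\|w_1\|-C_5\sqrt2\,K^{k_0-1}\geq \eta K^{k_0}-C_5\sqrt2\,K^{k_0-1}\geq c\|x_n\|$ hi-mode edges once $K$ is chosen large (relative to the fixed $\eta$ and $C_5$), and all of these lie in $A(k_0)$; this proves (L2). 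The main obstacle will be exactly the cylinder versions of Marchand's theorem and of the weak shape theorem: one must set these up with no moment assumption (forcing the Cerf--Th\'eret construction and a verification of its compatibility with the cylinder constraint) and uniformly enough over the endpoint $w$ to license the localization step; that, rather than any of the bookkeeping, is the delicate point, and in dimensions $d\geq3$ it is the reason for the extra hypothesis $\mathbb{E}t_e<\infty$.
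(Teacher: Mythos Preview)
Your proposal is correct and follows the paper's approach closely: reduce to the two lemmas (L1) and (L2), then combine via the $\xi_n=\sqrt{\epsilon_n}$ Markov argument from Proposition~\ref{prop: good}. The localization to $A(k_0)$ via $w_0,w_1$ and the subtraction of the geodesic-length bound is also as in the paper.

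One simplification worth noting: where you propose to set up a ``cylinder version of Marchand's theorem'' together with a cylinder weak shape theorem, the paper takes a shorter route. It invokes \cite[Prop.~1.3]{CD}, which says the cylinder time constant $\lim_n T(0,nx;\alpha)/n$ equals the full-plane $g(x)$ (and likewise for the primed weights). Once you know this, the original full-plane Marchand theorem \cite[Theorem~1.5]{Marchand} gives $g'(x)\geq g(x)+\eta\|x\|$ directly, and the rest of the Lemma~\ref{lem: good_lemma_2} argument carries over without any new cylinder-specific strict-inequality result. This avoids the delicate part you flag at the end of your proposal.
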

\begin{proof}
The proof again splits into considering each item in the definition of $\mathcal{N}_n$. Item 1 is handled exactly as before (the proof of Lemma~\ref{lem: good_lemma_1} goes through without modifications). For item 2 (corresponding to Lemma~\ref{lem: good_lemma_2}), the same proof works as well. First, the analogue of \eqref{eq: geodesic_step_1} is shown in the same way, but now using that
\[
\lim_{n \to \infty} \frac{T(0,nx;\alpha)}{n} \text{ exists in probability for } x \in \mathbb{Z}^2,
\]
and is equal to the limit $g(x)$ defined in \eqref{eq: g_def}. (See \cite[Prop.~1.3]{CD}.) The proof of \eqref{eq: geodesic_step_2} is the same as before. The only other difference in the proof is that, since we consider only the annulus $A(k_0)$, in the argument below \eqref{eq: last_event_2}, we define $w_0$ and $w_1$ relative to the annuli $A(k_0-1)$ and $A(k_0+1)$. We end as previously: if $\Gamma_1$ is the segment of a $T(\cdot,\cdot;\alpha)$-geodesic $\Gamma$ (from $0$ to $x_n$) that connects $w_0$ and $w_1$, then 
\[
\Gamma_1 \text{ contains at least } \delta\|w_1\| - C_5 \|w_0\|  \text{ many hi-mode edges}.
\]
This is larger than $K^{k_0-1} \geq c\|x_n\|$, for $c$ small.
\end{proof}

\bigskip
\noindent
{\bf Step 2. Small ball for the conditional mean.} For the small ball result, the corresponding proposition is:
\begin{prop}\label{prop: new_small_ball}
There exists $\epsilon \in (0,1)$ such that
\[
\sup_{r \in I_n'} \mathbb{P}\left( \mathbb{E}[T_n \mid \vec{N}] \in [r,r+\epsilon\|x_n\|^{\varpi}], \vec{N} \in \mathcal{N}\right) \leq 1-\epsilon.
\]
\end{prop}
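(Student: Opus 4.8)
The plan is to run Step~2 of the proof of Theorem~\ref{thm: main_thm} with the single top-scale index $k_0=k_0(n)=\lfloor\log_K\|x_n\|_\infty\rfloor$ in place of the whole window of indices, and to extract from that one coordinate a fluctuation of order $\|x_n\|^{\varpi}$ rather than a constant. Following \eqref{eq: decomposition} I would write $N_{k_0}\stackrel{d}{=}X+\eta Y$, with $\eta$ a Bernoulli$(1/2)$ variable independent of $(X,Z)$, where $X$ is distributed as $N_{k_0}$ conditioned below its median, $Z$ as $N_{k_0}$ conditioned above, and $Y=Z-X$ (so $Y\ge 0$ always). Set $\mu_{k_0}=\mathbb E N_{k_0}$, $\sigma_{k_0}=\sqrt{\mathrm{Var}\,N_{k_0}}$, let $\vec N^{(\ell)}$ denote $\vec N$ with $\eta=\ell$, and put $\mathcal G_0=\{X\le\mu_{k_0}-\sigma_{k_0}\}\cap\{Z\ge\mu_{k_0}+\sigma_{k_0}\}$. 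Since $N_{k_0}$ is binomial with parameter $\#\bigl(A(k_0)\cap C(0,x_n;\alpha)\bigr)\asymp\|x_n\|^{1+\alpha}\to\infty$, we have $\sigma_{k_0}\asymp\|x_n\|^{(1+\alpha)/2}$, the central-limit computation at the end of Step~2 of the proof of Theorem~\ref{thm: main_thm} gives $\liminf_n\mathbb P(\mathcal G_0)\ge\rho_0>0$, and on $\mathcal G_0$ one has $N_{k_0}^{(1)}-N_{k_0}^{(2)}=Y\ge 2\sigma_{k_0}$. Finally, since $\vec N^{(1)}$ and $\mathcal G_0$ are independent of $\eta$ and $\mathbb P(\vec N\in\mathcal N_n)>1-\xi_n$, one gets $\mathbb P(\vec N^{(1)}\in\mathcal N_n)\ge 1-2\xi_n$ and hence $\mathbb P\bigl(\mathcal G_0\cap\{\vec N^{(1)}\in\mathcal N_n\}\bigr)\ge\rho_0/2$ for $n$ large.

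The core of the argument is a one-flip estimate: there is $c_2>0$ such that, for $n$ large, on $\mathcal G_0\cap\{\vec N^{(1)}\in\mathcal N_n\}$,
\[
\mathbb E[T_n\mid\vec N](\eta=1)-\mathbb E[T_n\mid\vec N](\eta=0)\ \ge\ c_2\,\|x_n\|^{\varpi}.
\]
I would prove this in the spirit of Claims~\ref{clam:upslb}--\ref{clam:innersanctum}. Flipping $\eta$ from $1$ to $0$ turns the set $S=\{f\in A(k_0)\cap C(0,x_n;\alpha):\pi_{k_0}(f)\in(N_{k_0}^{(2)},N_{k_0}^{(1)}]\}$, of size $Y\ge 2\sigma_{k_0}$, from hi- to lo-mode and leaves all other weights unchanged. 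Conditioning on everything that determines the deterministically chosen cylinder geodesic $\Gamma^{(1)}$ from $0$ to $x_n$ in the configuration $(\vec N^{(1)},\Pi,P)$ --- the hi-mode edge sets of all slabs, the permutations $(\pi_k)_{k\ne k_0}$, and the entries of $P$ actually used in that configuration, but not the internal ordering of the hi-mode edges of $A(k_0)\cap C(0,x_n;\alpha)$ nor their lo-mode weights $(t_f^{(L)})$ --- the set $H$ of hi-mode edges of $\Gamma^{(1)}$ in $A(k_0)\cap C(0,x_n;\alpha)$ is fixed, with $|H|\ge c\|x_n\|$ on $\{\vec N^{(1)}\in\mathcal N_n\}$ by the second item of the definition of $\mathcal N_n$. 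Revealing the internal ordering makes $S$ a uniform $Y$-subset of the $N_{k_0}^{(1)}$ hi-mode edges, so $\#(S\cap H)$ is hypergeometric with mean $|H|\,Y/N_{k_0}^{(1)}\gtrsim \|x_n\|\cdot\|x_n\|^{(1+\alpha)/2}/\|x_n\|^{1+\alpha}=\|x_n\|^{\varpi}$ and variance at most its mean; by Chebyshev, $\#(S\cap\Gamma^{(1)})\ge\#(S\cap H)\ge c'\|x_n\|^{\varpi}$ with conditional probability $\to1$. On this event, using $\Gamma^{(1)}$ as a (suboptimal) cylinder path in the flipped configuration gives
\[
T(0,x_n;\alpha)[\vec N^{(1)}]-T(0,x_n;\alpha)[\vec N^{(2)}]\ \ge\ \sum_{f\in S\cap\Gamma^{(1)}}\bigl(t_f^{(H)}-t_f^{(L)}\bigr)\ \ge\ \kappa\,\#\{f\in S\cap H:t_f^{(L)}<d_1\},
\]
where $d_1\in(I,d_0)$ is fixed with $q:=\mathbb P(t_f^{(L)}<d_1)>0$ and $\kappa:=d_0-d_1>0$ (note no moment assumption is used here). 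The family $(t_f^{(L)})_{f\in S}$ is, conditionally, i.i.d.\ of law $F(\cdot\mid t\le d_0)$ and independent of $\Gamma^{(1)}$ and $H$ (the reasoning behind \eqref{eq:teprop}), so the last count is conditionally $\mathrm{Binomial}(\#(S\cap H),q)$ and, by a Chernoff bound, is at least $\tfrac{q}{2}c'\|x_n\|^{\varpi}$ with conditional probability $\to1$. Intersecting also with the first item of the definition of $\mathcal N_n$, so that $T(0,x_n;\alpha)[\vec N^{(1)}]\in I_n'$ and the lower truncation of $T_n$ at $M_n-\|x_n\|^{\varpi}/2$ costs at most $\|x_n\|^{\varpi}/4$ of the decrease, one gets $T_n[\vec N^{(1)}]-T_n[\vec N^{(2)}]\ge c_5\|x_n\|^{\varpi}$, with $c_5=\min(\tfrac{q}{2}\kappa c',\tfrac14)>0$, on a set of $(\Pi,P)$ of conditional probability at least $\tfrac12$; since $T_n$ is monotone in $\vec N$ the integrand is nonnegative off this set, and averaging over $(\Pi,P)$ yields the displayed inequality with $c_2=c_5/2$.

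To finish, set $\epsilon=\tfrac12\min\bigl(c_2,\tfrac{\rho_0}{2}\bigr)$ and fix $r\in I_n'$. Conditioning on $(X,Z)$ and on $(N_j)_{j\ne k_0}$ makes $\mathcal G_0\cap\{\vec N^{(1)}\in\mathcal N_n\}$ measurable while $\eta$ is still an independent fair coin; on that event the one-flip estimate shows the two possible values of $\mathbb E[T_n\mid\vec N]$ differ by at least $c_2\|x_n\|^{\varpi}>\epsilon\|x_n\|^{\varpi}$, so at most one of them lies in $[r,r+\epsilon\|x_n\|^{\varpi}]$ --- this is exactly the $\#\mathcal I_n\le 1$ instance of the antichain bound --- and hence the conditional probability of $\{\mathbb E[T_n\mid\vec N]\in[r,r+\epsilon\|x_n\|^{\varpi}],\ \vec N\in\mathcal N_n\}$ is at most $\tfrac12$ there, while on the complement we bound it by $1$. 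Taking expectations and using $\mathbb P\bigl(\mathcal G_0\cap\{\vec N^{(1)}\in\mathcal N_n\}\bigr)\ge\rho_0/2$, we obtain $\mathbb P\bigl(\mathbb E[T_n\mid\vec N]\in[r,r+\epsilon\|x_n\|^{\varpi}],\ \vec N\in\mathcal N_n\bigr)\le 1-\tfrac{\rho_0}{4}\le 1-\epsilon$, uniformly in $r\in I_n'$ and in $n$ large, which is the assertion.

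I expect the main obstacle to be the quantitative upgrade of Claim~\ref{clam:upslb}: the full-space argument only needs one decreased edge lying on a geodesic, whereas here we need of order $\|x_n\|^{\varpi}$ of them, which forces us to combine the hypergeometric concentration above with the geodesic-length lower bound built into the second item of the definition of $\mathcal N_n$. A secondary nuisance is that the induced decrease is now of the same order $\|x_n\|^{\varpi}$ as the truncation window, so the effect of the truncation on $T_n$ must be tracked explicitly rather than absorbed into a constant as in the proof of Theorem~\ref{thm: main_thm}.
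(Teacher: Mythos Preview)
Your argument is correct and follows the same architecture as the paper's proof: decompose only the top-scale coordinate $N_{k_0}=X+\eta Y$, establish a one-flip lower bound $\mathbb{E}[T_n\mid\vec N](\eta=1)-\mathbb{E}[T_n\mid\vec N](\eta=0)\ge c\|x_n\|^{\varpi}$ on a set of $(X,Z,(N_j)_{j\ne k_0})$ of probability bounded below, and then read off the $\{0,1\}$-antichain bound. Two implementation details differ. First, to show that order $\|x_n\|^{\varpi}$ flipped edges land on the geodesic, the paper computes first and second moments of this count and invokes Paley--Zygmund to get probability at least some $\delta_0>0$; you instead observe that, conditionally, the count is hypergeometric and use Chebyshev to get probability tending to $1$, which is a bit cleaner and in fact stronger. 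Second, to convert these flips into a passage-time drop, the paper integrates out each $t_{f(i)}^{(L)}$ exactly (using $d_0-\mathbb{E}[t_e\mid t_e\le d_0]>0$), whereas you fix a threshold $d_1<d_0$ and count how many $t_{f}^{(L)}$ fall below it via a Binomial bound; both routes avoid any moment assumption and yield the same order. One phrasing to tighten: you write ``$|H|\ge c\|x_n\|$ on $\{\vec N^{(1)}\in\mathcal N_n\}$'', but item~2 of the definition of $\mathcal N_n$ only gives this with $(\Pi,P)$-conditional probability $>1-\xi_n$; your later ``on a set of $(\Pi,P)$ of conditional probability at least $\tfrac12$'' already absorbs this, so the argument stands once you state the restriction explicitly.
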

\begin{proof}
Again we define the variables $(X_j, \eta_j, Y_j)$ so that $N_j \stackrel{d}{=} X_j + \eta_j Y_j$, and $X_j,Y_j \geq 0$ are independent of $\eta_j$, which is a Bernoulli$(1/2)$ variable. Next, though, we need to change the definition of $\mathcal{I}_n$ to be
\[
\mathcal{I}_n = \begin{cases}
\{k_0\} & \quad\text{if } X_{k_0} \leq \mu_{k_0} - \sigma_{k_0}, Z_{k_0} \geq \mu_{k_0} + \sigma_{k_0} \\
\varnothing & \quad\text{otherwise}.
\end{cases}
\]
We then condition on $(X_i,Z_i)_{i \geq 1}$ and $(\eta_i)_{i \notin \mathcal{I}_n}$ and write the probability in the proposition as
\begin{equation}\label{eq: conditioning_again}
\mathbb{E}\left[ \mathbb{P}\left( \mathbb{E}[T_n \mid \vec{N}] \in [r,r+\epsilon \|x_n\|^{\varpi}], \vec{N} \in \mathcal{N}_n \mid (X_i,Z_i)_{i \geq 1}, (\eta_i)_{i \notin \mathcal{I}_n}\right)\right].
\end{equation}

Now we must show an analogous antichain statement:
\begin{lem}\label{lem: new_anti_chain}
There exists $\epsilon>0$ and $K$ sufficiently large such that for all large $n$, all $r \in I_n'$, and any choice of fixed $(X_i,Z_i)_{i \geq 1}$ and $(\eta_i)_{i \notin \mathcal{I}_n}$, the set
\[
Q = \left\{ (\eta_i)_{i \in \mathcal{I}_n} : \mathbb{E}[T_n \mid \vec{N}]((\eta_i)) \in [r,r+\epsilon\|x_n\|^\varpi], \vec{N} \in \mathcal{N}_n\right\}
\]
is an antichain in $\{0,1\}^{\#\mathcal{I}_n}$.
\end{lem}

Given this lemma (whose proof we will next sketch), we note that since $\mathcal{I}_n$ has cardinality at most 1, we do not need to use Sperner's theorem to bound the probability of such a small antichain. Indeed, any antichain in $\{0,1\}$ has probability at most $1/2$. Therefore, given the lemma, the probability in \eqref{eq: conditioning_again} is at most
\[
\mathbb{E}\left( \frac{1}{2} \mathbf{1}_{\{\mathcal{I}_n \neq \varnothing\}} + \mathbf{1}_{\{\mathcal{I}_n = \varnothing\}}\right).
\]
By the argument leading to \eqref{eq: bernoulli_estimate}, one has $\mathbb{P}(\mathcal{I}_n \neq \varnothing) \geq 0.04$, and so the above expectation is bounded by $1-\epsilon$. This would complete the proof of Proposition~\ref{prop: new_small_ball}.

To justify Lemma~\ref{lem: new_anti_chain}, we may suppose that $\mathcal{I}_n \neq \varnothing$ and can take $\eta^{(1)} = \eta_i^{(1)}$ and $\eta^{(2)} = (\eta_i^{(2)})$ as realizations of the $\eta$ variables that are equal off of $\mathcal{I}_n$ and such that $\eta_{k_0}^{(1)}=1$ and $\eta_{k_0}^{(2)}=0$. We follow the proof as before, but defining the event $\Upsilon$ differently:
\[
\Upsilon = \left\{ (\Pi,P) : \begin{array}{c} \text{ at least } \epsilon\|x_n\|^\varpi \text{ edges } f \in A(k_0) \cap \overline{\textsc{Geo}}^{(1)}(0,x_n) \\ \text{ have } \pi_j(f) \in (N_{k_0}^{(2)}, N_{k_0}^{(1)}]\end{array} \right\}.
\]
(Here, $\overline{\textsc{Geo}}^{(1)}(0,x_n)$ is understood as the collection of edges in any geodesic between $0$ and $x_n$ for $T(0,x_n;\alpha)$.) On the event $\Upsilon$, instead of defining only one edge $e(0)$ with the above properties, we define a sequence $e(0), \dots, e(\lfloor \epsilon \|x_n\|^\varpi \rfloor-1)$ as the first $\lfloor \epsilon \|x_n\|^\varpi \rfloor$ edges $f$ in some deterministic ordering with $\pi_{k_0}(f) \in (N_{k_0}^{(2)}, N_{k_0}^{(1)}]$.

Corresponding to Claim~\ref{clam:upslb}, we have a lower bound uniformly in $n$, $(X_i,Z_i)_{i \geq 1}$, $r \in I_n'$, $(\eta_i)_{i \notin \mathcal{I}_n}$, and $(\eta_i^{(1)})_{i \in \mathcal{I}_n} \in Q$:
\[
\mathbb{P}((\Pi,P) \in \Upsilon) \geq \delta_0 > 0.
\]
The proof of this inequality is another permutation computation. One can estimate the first and second moments of the number of edges $f \in A(k_0) \cap \overline{\textsc{Geo}}^{(1)}(0,x_n)$ with $\pi_j(f) \in (N_{k_0}^{(2)}, N_{k_0}^{(1)}]$ (conditional on $U_1 \cup U_2, P, (\pi_k)_{k \neq k_0}$ as before, and removing the event where $\#(U_1 \cup U_2) \cap \overline{\textsc{Geo}}^{(1)}(0,x_n) < c\|x_n\|$). The claimed bound follows from the Paley-Zygmund inequality.

Next, we again define $\Upsilon_1 = \{(\Pi,P) : T(0,x_n) \in I_n' \text{ in } (t_e^{(1)})\}$ and $\Upsilon_2 = \Upsilon \cap \Upsilon_1$, and decompose over the values of the $e(i)$'s (as in \eqref{eq:braket}) to obtain
\[
\sum_{(f(i))} \int_{\Upsilon_2} \left[ T_n(\vec{N}^{(1)}, \Pi, P) - T_n(\vec{N}^{(2)}, \Pi, P) \right] \mathbf{1}_{\{e(i) = f(i) \text{ for all } i\}}~\text{d}(\Pi,P).
\]
If $\hat{P}$ is the collection of all $t_e^{(L)}$ and $t_e^{(H)}$ for $e \neq f(i)$, along with $t_{f(i)}^{(H)}$ for all $i$, then Fubini's theorem again gives that the previous display is equal to
\[
\sum_{(f(i))} \int_{\Upsilon_2} \left[ \int \left[ T_n(\vec{N}^{(1)}, \Pi, P) - T_n(\vec{N}^{(2)}, \Pi, P) \right] ~\prod_i \text{d}t_{f(i)}^{(L)} \right] \mathbf{1}_{\{e(i) = f(i) \text{ for all } i\}}~\text{d}(\Pi,\hat P).
\]

The version of Claim~\ref{clam:innersanctum} we need is: there exists $c'>0$ such that for $n$ large, any choice of $f(i)$'s, any given values of the parameters of $\Upsilon$, and any choice of $\Pi, \hat P$ such that $\Upsilon_2 \cap \{e(i) = f(i) \text{ for all }i\}$ occurs,
\begin{equation}\label{eq: my_integral}
\int \left[ T_n(\vec{N}^{(1)}, \Pi, P) - T_n(\vec{N}^{(2)}, \Pi, P) \right] ~\prod_i \text{d}t_{f(i)}^{(L)} \geq c'\|x_n\|^{\varpi}.
\end{equation}
The proof is nearly the same as the corresponding proof of Claim~\ref{clam:innersanctum}. We first have
\begin{align*}
T(0,x_n;\alpha)[(\vec{N}^{(1)}, \Pi, P)] - T(0,x_n;\alpha)[(\vec{N}^{(2)}, \Pi, P)] &\geq \sum_i [t_{f(i)}^{(H)} - t_{f(i)}^{(L)}] \\
&\geq \sum_i [ d_0 - t_{f(i)}^{(L)}].
\end{align*}
As before, the integral of each $d_0 - t_{f(i)}^{(L)}$ is $\geq c''>0$, so (again using $T(0,x_n;\alpha)[(\vec{N}^{(1)}, \Pi, P)] \in I_n'$), the left side of \eqref{eq: my_integral} is at least $c'' \lfloor \epsilon \|x_n\|^{\varpi}\rfloor$. This shows \eqref{eq: my_integral}.

The end of step 2 follows the lines of its counterpart. We obtain
\[
\mathbb{E}[T_n \mid \vec{N}] (\eta^{(2)}) \leq \mathbb{E}[T_n \mid \vec{N}](\eta^{(1)}) - c'\|x_n\|^{\varpi} \mathbb{P}((\Pi,P) \in \Upsilon_2).
\]
Because $\mathbb{P}((\Pi,P) \in \Upsilon_2)$ is uniformly positive, the term on the far right is at least $\epsilon\|x_n\|^{\varpi}$ for some possibly smaller $\epsilon>0$. For this choice of $\epsilon$, we complete the proof of Lemma~\ref{lem: new_anti_chain}.
\end{proof}

\bigskip
\noindent
{\bf Step 3. Reckoning.}
The result which parallels Proposition~\ref{prop: contradiction} and contradicts Assumption~\ref{ass: assumption_2} is:
\begin{prop}
There exists $c_1>0$ and real $M_n'$ such that for all large $n$,
\[
\mathbb{P}(T(0,x_n;\alpha) \leq M_n' - c_1 \|x_n\|^{\varpi}) > c_1
\]
and
\[
\mathbb{P}(T(0,x_n;\alpha) \geq M_n' + c_1 \|x_n\|^{\varpi}) > c_1.
\]
\end{prop}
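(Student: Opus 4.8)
The plan is to run the proof of Proposition~\ref{prop: contradiction} essentially verbatim, with $\sqrt{\log\|x_n\|}$ everywhere replaced by $\|x_n\|^{\varpi}$, and with one structural change: since the small-ball bound now available (Proposition~\ref{prop: new_small_ball}) is only of constant order $1-\epsilon$ rather than $o(1)$, the covering-plus-median argument used in Proposition~\ref{prop: contradiction} to show that $\mathbb{E}[T_n\mid\vec{N}]$ is far from a median with positive probability must be replaced by one that extracts two well-separated \emph{quantiles} of $\mathbb{E}[T_n\mid\vec{N}]$.

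The first step is to locate $M_n'$. Write $Y_n=\mathbb{E}[T_n\mid\vec{N}]$; since $T_n\in[A_n,B_n]$ with $B_n-A_n=\|x_n\|^\varpi$, the variable $Y_n$ is a.s.\ confined to an interval of length $\|x_n\|^\varpi$. I would first observe that the bound in Proposition~\ref{prop: new_small_ball} in fact holds for every $r\in\mathbb{R}$: the hypothesis $r\in I_n'$ is never used in the proof of Lemma~\ref{lem: new_anti_chain} (it is stated only because it is all that is needed). Combining this with $\mathbb{P}(\vec{N}\notin\mathcal{N}_n)<\xi_n\to0$ gives, for $n$ large, $\mathbb{P}(Y_n\in[r,r+\epsilon\|x_n\|^\varpi])\le 1-\epsilon/2$ for all $r$. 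Now let $a_n,b_n$ be the $(\epsilon/8)$- and $(1-\epsilon/8)$-quantiles of $Y_n$. If $b_n-a_n<\epsilon\|x_n\|^\varpi$ then $[a_n,b_n]\subseteq[a_n,a_n+\epsilon\|x_n\|^\varpi]$, so $\mathbb{P}(Y_n\in[a_n,a_n+\epsilon\|x_n\|^\varpi])\ge\mathbb{P}(a_n\le Y_n\le b_n)\ge 1-\epsilon/4$, contradicting the previous display; hence $b_n-a_n\ge\epsilon\|x_n\|^\varpi$. Taking $M_n'=(a_n+b_n)/2$ and $c_2=\epsilon/16$, one has $M_n'-c_2\|x_n\|^\varpi>a_n$ and $M_n'+c_2\|x_n\|^\varpi<b_n$, whence $\mathbb{P}(Y_n\le M_n'-c_2\|x_n\|^\varpi)\ge\mathbb{P}(Y_n\le a_n)\ge\epsilon/8>c_2$ and, symmetrically, $\mathbb{P}(Y_n\ge M_n'+c_2\|x_n\|^\varpi)>c_2$.

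The remaining step is to transfer these two bounds from $Y_n$ to $T(0,x_n;\alpha)$, which I would carry out exactly as in the second half of the proof of Proposition~\ref{prop: contradiction}. For the upper tail, with $\mathsf{E}=\{\vec{N}:Y_n\ge M_n'+c_2\|x_n\|^\varpi\}$ and $\mathsf{A}(\vec{N})=\{(\Pi,P):T_n(\vec{N},\Pi,P)\ge M_n'+(c_2/2)\|x_n\|^\varpi\}$, I would split $Y_n-M_n'=\int(T_n-M_n')\,\mathrm{d}(\Pi,P)$ over $\mathsf{A}$ and $\mathsf{A}^c$, using $T_n-M_n'\le B_n-M_n'\le\|x_n\|^\varpi$ on $\mathsf{A}$ and $T_n-M_n'<(c_2/2)\|x_n\|^\varpi$ on $\mathsf{A}^c$, to conclude $\mathbb{P}((\Pi,P)\in\mathsf{A})\ge c_2/(2-c_2)$ whenever $\vec{N}\in\mathsf{E}$; since $T_n\le T(0,x_n;\alpha)$ this yields $\mathbb{P}(T(0,x_n;\alpha)\ge M_n'+(c_2/2)\|x_n\|^\varpi)\ge\int_{\mathsf{E}}\mathbb{P}((\Pi,P)\in\mathsf{A})\,\mathrm{d}\vec{N}\ge c_2^2/(2-c_2)$. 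The lower tail is symmetric, now using the identity $\{T(0,x_n;\alpha)\le c\}=\{T_n\le c\}$ valid for $A_n\le c<B_n$, and noting that $M_n'-(c_2/2)\|x_n\|^\varpi$ lies in $[A_n,B_n)$ for $n$ large (since $a_n,b_n\in[A_n,B_n]$ and $b_n-a_n\ge\epsilon\|x_n\|^\varpi$). Taking $c_1>0$ small enough finishes the proof and contradicts Assumption~\ref{ass: assumption_2}.

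I expect the only genuinely non-routine point to be the quantile step: one must be convinced that a small-ball estimate of order $1-\epsilon$ --- seemingly useless for controlling a distribution --- nonetheless forces $\mathbb{E}[T_n\mid\vec{N}]$ to spread over a window of width $\asymp\|x_n\|^\varpi$, because its support has width exactly $\|x_n\|^\varpi$ and no subinterval of width $\epsilon\|x_n\|^\varpi$ can absorb more than a $(1-\epsilon/2)$-fraction of its mass. Everything else is a mechanical rerun of Section~\ref{sec: step_3} under the substitution $\sqrt{\log\|x_n\|}\rightsquigarrow\|x_n\|^\varpi$.
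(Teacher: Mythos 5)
Your proof is correct, and the transfer step (from $\mathbb{E}[T_n \mid \vec{N}]$ to $T(0,x_n;\alpha)$) is a faithful rerun of the second half of the proof of Proposition~\ref{prop: contradiction}. But your method for locating $M_n'$ and extracting the two tail events for $Y_n = \mathbb{E}[T_n \mid \vec{N}]$ is genuinely different from what the paper does. The paper keeps the median property of $M_n$ (from Lemma~\ref{lem: truncation}) in play: it notes that at least $\epsilon/2$ of the mass of $Y_n$ falls outside the $\epsilon\|x_n\|^\varpi$-window centered at $M_n$, splits into cases according to which side that mass is on, and shifts $M_n'$ by $(\epsilon/4)\|x_n\|^\varpi$ toward that side so that one tail bound comes from the excess mass and the other from the median property of $M_n$. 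Your argument instead discards the median property entirely and works with the $(\epsilon/8)$- and $(1-\epsilon/8)$-quantiles $a_n, b_n$ of $Y_n$, arguing that the constant-order small-ball bound forces $b_n - a_n \geq \epsilon\|x_n\|^\varpi$ and then centering $M_n'$ between them. This avoids the case split and is arguably cleaner, but it pays a price: you need the small-ball bound for $r$ at the quantiles, which can sit outside $I_n'$ (e.g., near $A_n$), whereas the paper only ever invokes the bound at an $r$ within $I_n'$. Your observation that the restriction $r \in I_n'$ is vestigial in the proof of Lemma~\ref{lem: new_anti_chain} is correct --- the proof uses only $\vec{N}^{(1)} \in \mathcal{N}_n$, never the numerical value of $r$ --- but it is a genuine extra step you must take and the paper does not.

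One small blemish: you justify the transfer for the upper tail by asserting ``$T_n \leq T(0,x_n;\alpha)$,'' which is false as a pointwise inequality (the truncation pushes $T_n$ up to $A_n$ when $T(0,x_n;\alpha) < A_n$). The step you actually need is the inclusion $\{T_n \geq c\} \subseteq \{T(0,x_n;\alpha) \geq c\}$, which holds because your threshold $c = M_n' + (c_2/2)\|x_n\|^\varpi$ satisfies $c > A_n$ (since $M_n' \geq A_n + (\epsilon/2)\|x_n\|^\varpi$), exactly as you observe for the lower tail. So the conclusion is right; only the stated reason in that one sentence is off.
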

The proof of this proposition is similar to that of Proposition~\ref{prop: contradiction}, except that we have to change the centering $M_n$ to $M_n'$ because the upper bound in Proposition~\ref{prop: new_small_ball} is only $1-\epsilon$. This main inequalities to verify are then for some $M_n'$ and $c_2>0$:
\[
\mathbb{P}\left( \mathbb{E}[T_n \mid \vec{N}] \leq M_n' - c_2 \|x_n\|^\varpi \right) > c_2
\]
and
\[
\mathbb{P}\left( \mathbb{E}[T_n \mid \vec{N}] \geq M_n' + c_2 \|x_n\|^\varpi \right)  > c_2.
\]
To prove this, we argue as follows. The probability that $\mathbb{E}[T_n \mid \vec{N}]$ is in an interval of length $\epsilon \|x_n\|^{\varpi}$ centered on $M_n$ is at most $1-\epsilon$. Therefore there must be probability at least $\epsilon/2$ for the expectation to lie to the left of this interval or to the right. If it is to the left, then because $M_n$ is a median, we find that for $M_n' = M_n - (\epsilon/4) \|x_n\|^{\varpi}$, the expectation has probability at least $1/2$ to lie to the right of $M_n' + c_2 \|x_n\|^\varpi$, with $c_2 = \epsilon/4$, and probability at least $\epsilon/2$ to lie to the left of $M_n' - c_2 \|x_n\|^\varpi$. Choosing $c_2$ smaller shows the claimed bounds.

The rest of the proof follows as before, putting $M_n'$ in place of $M_n$.
\qed

\bigskip
\noindent
{\bf Acknowledgements.} The authors thank an anonymous referee for comments that helped improve the paper. The research of M. D. is supported by an NSF CAREER grant. The research of J. H. is supported by NSF grant DMS-1612921. The research of C. H. is supported by the Simons Foundation grant \#524678. Some of this work was done at the conference Recent Trends in Continuous and Discrete Probability at Georgia Tech in Summer, 2018.


\begin{thebibliography}{1}

\bibitem{AGKW}
M. Aizenman, F. Germinet, A. Klein, and S. Warzel. (2009). On Bernoulli decompositions for random variables, concentration bounds, and spectral localization. {\it Probab. Theory Related Fields}, {\bf 143}, 219-238.

\bibitem{Alexander97}
K. Alexander. (1997). Approximation of subadditive functions and convergence rates in limiting-shape results. {\it Ann. Probab.}, {\bf 25}, 30-55.

\bibitem{AD11}
A. Auffinger and M. Damron. (2013). Differentiability at the edge of the percolation cone and related results in first-passage percolation. {\it Probab. Theory Related Fields,} {\bf 156}, 193-227.

\bibitem{ADH_gamma}
A. Auffinger, M. Damron, and J. Hanson. (2014). Rate of convergence of the mean for sub-additive ergodic sequences. {\it Adv. Math.}, {\bf 285}, 138-181.

\bibitem{ADH_book}
A. Auffinger, M. Damron, and J. Hanson. (2017). 50 years of first-passage percolation. University Lecture Series, 68. {\it American Mathematical Society, Providence, RI.}, v+161 pp.

\bibitem{BC18}
E. Bates and S. Chatterjee. (2018). Fluctuation lower bounds in planar random growth models. {\it arXiv: 1810. 03656.}

\bibitem{BR}
M. Bena\"im and R. Rossignol. (2008). Exponential concentration for first passage percolation through modified Poincar\'e inequalities. {\it Ann. Inst. H. Poincar\'e Probab. Statist.}, {\bf 44}, 544-573.

\bibitem{BKS}
I. Benjamini, G. Kalai, and O. Schramm. (2003). First passage percolation has sublinear distance variance. {\it Ann. Probab.}, {\bf 31}, 1970-1978.

\bibitem{CT}
R. Cerf and M. Th\'eret. (2016). Weak shape theorem in first passage percolation with infinite passage times. {\it Ann. Inst. H. Poincar\'e Probab. Statist.}, {\bf 52}, 1351-1381.

\bibitem{Chatterjee}
S. Chatterjee. (2013). The universal relation between scaling exponents in first-passage percolation. {\it Ann. Math.}, {\bf 177}, 1--35.

\bibitem{C17}
S. Chatterjee. (2017). A general method for lower bounds on fluctuations of random variables. {\it To appear in Ann. Probab.}

\bibitem{CD}
S. Chatterjee and P. Dey. (2013). Central limit theorem for first-passage percolation time across thin cylinders. {\it Probab. Theory Related Fields}, {\bf 156}, 613-663.

\bibitem{DHL}
M. Damron, J. Hanson, and W.-K. Lam. (2018). Some universality results for 2$D$ critical first-passage percolation. {\it In preparation.}

\bibitem{DHS15}
M. Damron, J. Hanson, and P. Sosoe. (2015). Sublinear variance in first-passage percolation for general distributions. {\it Probab. Theory Related Fields}, {\bf 163}, 223-258.

\bibitem{DLW}
M. Damron, W.-K. Lam, and X. Wang. (2017). Asymptotics for $2D$ critical first-passage percolation. {\it Ann. Probab.}, {\bf 45}, 2941-2970.

\bibitem{durrett_oriented}
R. Durrett. (1984). Oriented percolation in two dimensions. {\it Ann. Probab.}, {\bf 12}, 999-1040.

\bibitem{gong2018lower}
R. Gong, C. Houdr\'e, and J. Lember. (2018). Lower bounds on the generalized central moments of the optimal alignments score of random sequences. {\it J. Theoret. Probab.} {\bf 31}, 643-683.

\bibitem{grimmett}
G. Grimmett. (1999). Percolation. Second edition. Grundlehren der Mathematischen Wissenschaften [Fundamental Principles of Mathematical Sciences], 321. {\it Springer-Verlag, Berlin.}, xiv+444 pp.

\bibitem{HoudreLCSVARLB2012}
C. Houdr\'e and J. Ma. On the order of the central moments of the length of the longest common subsequence in random words. {\it High dimensional probability VII,} 105-136, Progr. Probab., 71, {\it Springe, [Cham],} 2016.

\bibitem{xxx}
C. Houdr\'e and C. Xu. Concentration of geodesics in directed Bernoulli percolation. {\it arXiv: 1607.02219.}

\bibitem{CX18}
C. Houdr\'e and C. Xu. (2018). Power low bounds for the central moments of the last passage time for directed percolation in a thin rectangle. {\it Unpublished manuscript.}

\bibitem{J}
K. Johansson. (2000). Shape fluctuations and random matrices. {\it Comm. Math. Phys.}, {\bf 209}, 437-476.

\bibitem{aspects}
H. Kesten. (1986). Aspects of first passage percoclation. {\it \'Ecole d'\'Et\'e de Probabilit\'es de Saint Flour XIV,} Lecture Notes in Mathematics, {\bf 1180}, 125-264.

\bibitem{Kesten_speed}
H. Kesten. (1993). On the speed of convergence in first-passage percolation. {\it Ann. Appl. Probab.}, {\bf 3}, 296-338.

\bibitem{lember2009standard}
J. Lember and H. Matzinger. (2009). Standard deviation of the longest common subsequence. {\it Ann. Probab.}, {\bf 37}, 1192-1235.

\bibitem{LNP}
C. Licea, C. M. Newman, and M. Piza. (1996). Superdiffusivity in first-passage percolation. {\it Probab. Theory Relat. Fields}, {\bf 106}, 559--591.

\bibitem{Marchand}
R. Marchand. (2002). Strict inequalities for the time constant in first passage percolation. {\it Ann. Appl. Probab.}, {\bf 12}, 1001-1038.

\bibitem{NP}
C.M. Newman and M. Piza. (1995). Divergence of shape fluctuations in two dimensions. {\it Ann. Probab.}, {\bf 23}, 977-1005.

\bibitem{PP}
R. Pemantle and Y. Peres. Planar first-passage percolation times are not tight. {\it Probability and phase transition (Cambridge, 1993),} 261-264, NATO Adv. Sci. Inst. Ser. C Math. Phys. Sci., 420, {\it Kluwer Acad. Publ., Dordrecht.}

\bibitem{sperner}
E. Sperner. (1928). Ein satz \"uber Untermengen einer endlichen Menge. {\it Math. Z.}, {\bf 27}, 544-548.

\bibitem{Tessera}
R. Tessera. (2014). Speed of convergence in first passage percolation and geodesicity of the average distance. {\it Ann. Inst. H. Poincar\'e Probab. Statist.}, {\bf 54}, 569-586.

\bibitem{chen_thesis}
C. Xu. (2018). Topics in percolation and sequence analysis. {\it Ph. D. Thesis, Georgia Institute of Technology.}

\end{thebibliography}
\end{document}